\date{\today}
\newcommand{\Z}{{\mathbb Z}}
\newcommand{\R}{{\mathbb R}}
\newcommand{\Q}{{\mathbb Q}}
\newcommand{\T}{{\mathbb T}}
\newcommand{\N}{{\mathbb N}}
\newcommand{\GL}{{\mathrm{GL}}}
\newcommand{\SL}{{\mathrm{SL}}}
\newcommand{\MM}{{\mathrm{M}}}
\newcommand{\Leb}{{\mathrm{Leb}}}
\newcommand{\Var}{{\widetilde{\mathrm{Var}}}}
\newtheorem{theorem}{Theorem}[section]
\newtheorem{lemma}[theorem]{Lemma}
\newtheorem{prop}[theorem]{Proposition}
\newtheorem{coro}[theorem]{Corollary}
\theoremstyle{definition}
\newtheorem{remark}[theorem]{Remark}
\theoremstyle{plain}
\allowdisplaybreaks \numberwithin{equation}{section}
\DeclareMathOperator{\supp}{supp}
\begin{document}

\title[Uniformity Aspects of Cocycles and Schr\"odinger Operators]{Uniformity Aspects of $\SL(2,\R)$ Cocycles and Applications to Schr\"odinger Operators Defined Over Boshernitzan Subshifts}

\author[D.\ Damanik]{David Damanik}
\address{Department of Mathematics, Rice University, Houston, TX~77005, USA}
\email{damanik@rice.edu}
\thanks{D.D.\ was supported in part by NSF grants DMS--1700131 and DMS--2054752, Simons Fellowship $\# 669836$, and an Alexander von Humboldt Foundation research award.}

\author[D.\ Lenz]{Daniel Lenz}
\address{Institute for Mathematics, Friedrich-Schiller University, Jena, 07743 Jena}
\email{daniel.lenz@uni-jena.de}

\begin{abstract}
We consider continuous $\SL(2,\R)$ valued cocycles over general
dynamical systems and discuss a variety of uniformity notions. In
particular, we provide a description of uniform one-parameter
families of continuous $\SL(2,\R)$ cocycles as $G_\delta$-sets.
These results are then applied to Schr\"odinger operators with
dynamically defined potentials.  In the case where the base dynamics
is given by a subshift satisfying the Boshernitzan condition, we
show that for a generic continuous sampling function, the associated
Schr\"odinger cocycles are uniform for all energies and, in the
aperiodic case,  the spectrum is a Cantor set of zero Lebesgue
measure.
\end{abstract}

\dedicatory{Dedicated to the memory of Michael Boshernitzan}

\maketitle

\section{Introduction}

Consider a compact metric space $\Omega$ and a homeomorphism $T : \Omega \to \Omega$. Such a pair $(\Omega, T)$ will be called a \emph{dynamical system} in this paper.\footnote{It would be more accurate to call it a topological dynamical system, but we hope this slight abuse of language does not lead to any confusion. Given that we are interested in topological notions and quantities, this is the natural setting for us to work in.} We will freely use standard concepts from the theory of dynamical systems such as minimality and unique ergodicity; see, for example, the textbook \cite{Wal}.

The set of real $2\times 2$ matrices with determinant equal to one is denoted by $\SL(2,\R)$. The elements of
$$
C(\Omega,\SL(2,\R) := \{ A : \Omega \to \SL(2,\R) : A \mbox{ continuous}\}
$$
are referred to as \emph{continuous} $\SL(2,\R)$ \emph{cocycles}.

Any continuous cocycle $A \in C(\Omega, \SL(2,\R))$ gives rise to the skew-product
$$
(T,A) : \Omega \times \R^2 \to \Omega \times \R^2, \; (\omega,v) \mapsto (T \omega, A(\omega)v).
$$
For $n \in \Z$, define $A_n : \Omega \to \SL(2,\R)$ by $(T,A)^n = (T^n,A_n)$.

A cocycle $A$  is called \emph{uniformly hyperbolic} if there exists $L > 0$ with
$$
\liminf_{n \to \infty} \frac{1}{n}\log\|A_n (\omega)\| \ge L
$$
uniformly in $\omega \in \Omega$.

One says that $A$ is \emph{uniform} if there is a number $L(A)$ such that
$$
\lim_{n \to \infty} \frac1n \log \|A_n(\cdot)\| = L(A)
$$
uniformly. Clearly, any uniform cocycle $A$  with $L(A)>0$ is uniformly hyperbolic.

A cocycle may or may not be uniform. However, by the subadditive ergodic theorem, once an ergodic measure $\mu$ is chosen, there is always a ($\mu$-dependent) $L_\mu(A)$ such that
$$
\lim_{n \to \infty} \frac1n \log \|A_n(\omega)\| = L_\mu(A) \text{ for $\mu$-almost every } \omega \in \Omega.
$$
The numbers $L(A)$ and $L_\mu(A)$ are called \emph{Lyapunov exponents}.

For our actual considerations, a further uniformity property of
cocycles will be relevant. A cocycle $A \in C(\Omega,\SL(2,\R))$ is
said to have \textit{uniform behavior} if it is either uniformly
hyperbolic,  or
$$
\limsup_{n \to \infty} \frac{1}{n} \log \|A_n (\omega)\| = 0
$$
uniformly in $\omega \in \Omega$. Note that this latter condition can also be written as  $\lim_{n \to \infty} \frac{1}{n} \log \|A_n (\omega)\| = 0$
uniformly in $\omega \in\Omega$ (as $\|B\|\geq 1$ for any $B \in \SL(2,\R)$, and hence $\log \|A_n (\omega)\| \geq 0$).

\begin{remark}\label{r.uniformvanishing}
It is known that the property
$$
\lim_{n \to \infty} \frac{1}{n} \log \|A_n (\omega)\| = 0 \text{ uniformly in } \omega \in\Omega
$$
is equivalent to the simultaneous vanishing of the Lyapunov exponent for all ergodic Borel probability measures $\mu$,
$$
\sup \{ L_\mu(A) : \mu \text{ ergodic} \} = 0;
$$
compare \cite[Proposition~1]{AB07}, \cite[Theorem~1]{S98}, and
\cite[Theorem~1.7]{SS00}. See also \cite{F97} for the special case
where there is only one ergodic measure and \cite{BN14} for related
work.
\end{remark}

Let us briefly discuss the relationship between these uniformity notions, see Appendix~\ref{a.UH} for more details. For uniquely ergodic dynamical systems, a continuous cocycle is uniformly hyperbolic if and only if it is uniform with $L(A) > 0$. From this we immediately conclude that for uniquely ergodic dynamical systems, a continuous cocycle is uniform if and only if it has uniform behavior. For general dynamical systems, it is obviously true that a uniform cocycle has uniform behavior. However, the converse does not hold. Indeed, for any non-uniquely ergodic system, there exist uniformly hyperbolic continuous cocycles that are not uniform.

We will be interested in one-parameter families of cocycles. This is
partly motivated by the application of our general results,
presented below, to the case of Schr\"odinger cocycles, which
naturally depend on the energy parameter. Let $I \subseteq \R$ be an
interval in $\R$ and equip $C(I\times \Omega, \SL(2,\R))$ with the topology of local uniform convergence. Define $ W(I, \Omega)$ to be the set
$$
\{ A \in C(I \times \Omega, \SL(2,\R)) : A(E,\cdot)\text{ has uniform behavior for each $E\in I$} \}.
$$
Then we have the following result:

\begin{theorem}\label{t:general-general}
Let $(\Omega,T)$ be a dynamical system and let $I \subseteq \R$ be
an interval. Then, $W(I,\Omega)$ is a $G_\delta$-set.
\end{theorem}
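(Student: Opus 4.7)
The plan is to realize $W(I,\Omega)$ as a countable intersection of open sets by (i) reformulating uniform behavior at a single parameter value as a countable family of open conditions, and (ii) applying a tube-lemma argument on an exhaustion of $I$ by compact subintervals.

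First, I would record the following equivalent form of uniform behavior. For $B \in C(\Omega,\SL(2,\R))$, set $\beta_n(B) \defeq \max_{\omega \in \Omega} \frac{1}{n}\log\|B_n(\omega)\|$. Submultiplicativity of the operator norm makes $n \mapsto n\beta_n(B)$ subadditive, so Fekete's lemma gives $\lim_n \beta_n(B) = \inf_n \beta_n(B)$. Combined with the clarification following Remark~\ref{r.uniformvanishing}, this yields: $B$ has uniform behavior if and only if $B$ is uniformly hyperbolic or, for every $k \in \mathbb{N}$, there exists $n \geq 1$ with $\beta_n(B) < 1/k$.

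Next I would collect three continuity/openness facts for the topology on $I \times C(I\times\Omega,\SL(2,\R))$: (a) the evaluation $(E,A) \mapsto A(E,\cdot) \in C(\Omega,\SL(2,\R))$ is continuous, by joint uniform continuity of $A$ on compact subsets of $I \times \Omega$; (b) the set of uniformly hyperbolic cocycles is open in $C(\Omega,\SL(2,\R))$ (a standard structural-stability fact via invariant cone fields); (c) each functional $B \mapsto \beta_n(B)$ is continuous on $C(\Omega,\SL(2,\R))$. Together these imply that for every $k \in \mathbb{N}$ the set
\[
\tilde{U}_k \defeq \{(E,A): A(E,\cdot) \text{ is uniformly hyperbolic}\} \cup \bigcup_{n \geq 1}\{(E,A): \beta_n(A(E,\cdot)) < 1/k\}
\]
is open in $I \times C(I\times\Omega,\SL(2,\R))$.

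Now fix a compact subinterval $J \subseteq I$ and set $W_k^J \defeq \{A : (E,A) \in \tilde{U}_k \text{ for every } E \in J\}$. The tube lemma (compactness of $J$, openness of $\tilde{U}_k$) shows that $W_k^J$ is open in $C(I\times\Omega,\SL(2,\R))$. By the first step, $\bigcap_k W_k^J$ coincides with the set of $A$ for which $A(E,\cdot)$ has uniform behavior for every $E \in J$: if $A$ lies in every $W_k^J$ and $A(E,\cdot)$ fails to be uniformly hyperbolic at some $E \in J$, then the $\beta_n$-alternative succeeds for every $k$, forcing $\inf_n \beta_n(A(E,\cdot)) = 0$, i.e.\ uniform vanishing. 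Writing $I$ as an increasing union $\bigcup_m J_m$ of compact subintervals then gives $W(I,\Omega) = \bigcap_{m,k} W_k^{J_m}$, a countable intersection of open sets. The main pivot is the tube-lemma step, which is what forces the passage through compact subintervals of $I$; the openness of the UH locus in $C(\Omega,\SL(2,\R))$ is the other ingredient that will need either a brief invariant-cone-field argument or a citation to the standard literature.
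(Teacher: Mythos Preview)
Your proof is correct and follows the same overall architecture as the paper: exhaust $I$ by compact subintervals, split at each parameter value into the uniformly hyperbolic case versus the uniformly-vanishing case, use openness of the uniformly hyperbolic locus for the first case, and use compactness (your tube lemma, the paper's hands-on covering) to pass from pointwise-in-$E$ statements to uniform ones.

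The one genuine difference is how you handle the vanishing alternative. The paper introduces an auxiliary lemma (Lemma~\ref{l:aux-zwei}) showing that a bound $\max_\omega \tfrac{1}{k}\log\|A_k(\omega)\| < L$ at a \emph{single} $k$ propagates to $\tfrac{1}{n}\log\|A_n(\omega)\| < L$ for all $n$ beyond an explicit threshold; this is what lets the paper show that its sets $W_\varepsilon$ are open. You replace this by observing that $n\mapsto n\beta_n(B)$ is subadditive, so Fekete's lemma gives $\lim_n\beta_n(B)=\inf_n\beta_n(B)$, whence uniform vanishing is equivalent to the existence, for each $k$, of \emph{some} $n$ with $\beta_n(B)<1/k$. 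This is a cleaner formulation: your open condition involves a single $n$ rather than a tail, so no propagation lemma is needed, and the $G_\delta$ structure is visible immediately. The paper's lemma, on the other hand, gives an explicit quantitative threshold which is reused later (e.g.\ in Proposition~\ref{p:inclusion-two}), so it earns its keep elsewhere; for the present theorem alone, your Fekete shortcut is the more economical route.
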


\begin{remark}\label{r:after-main-theorem}
(a) Of course the theorem can be applied with $I$ being just one
point. This gives that the set of $A \in C(\Omega,\SL(2,\R))$ with
uniform behavior is a $G_\delta$-set. This particular case was
known; see the first paragraph of the proof of
\cite[Theorem~1]{AB07}. In fact, under suitable assumptions on
$(\Omega,T)$, Avila and Bochi even show that it is a \emph{dense}
$G_\delta$-set \cite[Theorem~1]{AB07}.
\\[1mm]
(b) An inspection of the proof shows that $I$ could be chosen as any topological space that is a countable union of compact subspaces.
\end{remark}

As pointed out above, for uniquely ergodic dynamical systems, a
cocycle is uniform if and only if it has uniform behavior, but in
general the set of uniform cocycles may be strictly smaller than the
set of cocycles with uniform behavior. This naturally raises the
question whether the set of uniform cocycles is a $G_\delta$-set in
general. Thus, let us consider
$$
U(I, \Omega) := \{ A \in C(I \times \Omega, \SL(2,\R)) : A(E,\cdot) \text{ is uniform for each } E \in I \}.
$$

The following theorem answers the question affirmatively.

\begin{theorem}\label{t:main-abstract}
Let $(\Omega,T)$ be a dynamical system and let $I \subset \R$ be an
interval. Then, $U(I,\Omega)$ is a $G_\delta$-set.
\end{theorem}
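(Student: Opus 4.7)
My plan is to combine Theorem~\ref{t:general-general} with a density argument in $I$ to express $U(I,\Omega)$ as a countable intersection of $G_\delta$-sets. First, reduce to the case of compact $I$ by writing $I = \bigcup_j K_j$ as a countable union of compact subintervals and using $U(I,\Omega) = \bigcap_j U(K_j,\Omega)$. Second, introduce the continuous functionals
\[
g_n^A(E) \defeq \tfrac{1}{n}\sup_{\omega} \log\|A_n(E,\omega)\|,
\qquad
h_n^A(E) \defeq \tfrac{1}{n}\inf_{\omega} \log\|A_n(E,\omega)\|,
\]
and note that Fekete's subadditive lemma applied to $n \mapsto \sup_\omega \log\|A_n(E,\omega)\|$ guarantees $g_n^A(E) \to L^*(A,E) \defeq \inf_n g_n^A(E)$ for every $(A,E)$, so that $A(E,\cdot)$ is uniform iff $\phi_n^A(E) \defeq g_n^A(E) - h_n^A(E) \to 0$.

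The central step is to establish, for any countable dense $D \subset I$, the identity
\[
U(I,\Omega) \;=\; W(I,\Omega) \;\cap\; \bigcap_{E \in D}\bigl\{A \in C(I \times \Omega, \SL(2,\R)) : A(E,\cdot) \text{ is uniform}\bigr\}.
\]
Only the inclusion $\supseteq$ requires work. Given $A$ in the right-hand side and any $E_0 \in I$, membership $A \in W(I,\Omega)$ forces $A(E_0,\cdot)$ to be either uniform-zero (hence already uniform, with exponent zero) or uniformly hyperbolic. In the UH case, $C^0$-openness of uniform hyperbolicity together with continuous $E$-dependence of the slices produces an open neighborhood $V \subset I$ of $E_0$ on which $A(E,\cdot)$ remains UH. The continuous invariant splitting then provides a continuous expansion rate $f(E,\omega) = \log|\lambda(E,\omega)|$ (where $A(E,\omega)e^u(E,\omega) = \lambda(E,\omega)e^u(E,T\omega)$ along the continuous unstable unit field $e^u$), and $L_\mu(A(E,\cdot)) = \int f(E,\cdot)\,d\mu$ depends continuously on $E \in V$ for each $T$-invariant probability measure $\mu$. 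The hypothesis on $V \cap D$ forces $\int f(E,\cdot)\,d\mu$ to be independent of the choice of ergodic $\mu$ on this dense subset; continuity in $E$ propagates the independence to $E_0$, and Oxtoby's uniform ergodic theorem applied to $f(E_0,\cdot)$, combined with the standard UH comparison $\log\|A_n(E_0,\omega)\| = \log\|A_n(E_0,\omega)\,e^u(E_0,\omega)\| + O(1)$, yields uniform convergence of $\frac{1}{n}\log\|A_n(E_0,\cdot)\|$, so $A(E_0,\cdot)$ is uniform.

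Putting the pieces together, Theorem~\ref{t:general-general} gives that $W(I,\Omega)$ is $G_\delta$, and for each fixed $E$ the set $\{A : A(E,\cdot) \text{ uniform}\}$ must be shown to be $G_\delta$; one obtains this single-cocycle case by exploiting the Fekete-monotone subsequence $g_{k!}^A(E) \searrow L^*(A,E)$ together with the slow-variation estimate $|\phi_{n+1}^A(E) - \phi_n^A(E)| = O(1/n)$ (coming from the one-step bound $|\log\|A_{n+1}(E,\omega)\| - \log\|A_n(E,\omega)\|| \leq \log\|A(E,\cdot)\|_\infty$) in order to trim the naive $G_{\delta\sigma\delta}$-description of $\phi_n^A(E) \to 0$ down to a $G_\delta$. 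A countable intersection of $G_\delta$-sets is $G_\delta$, yielding the result. The main obstacle is this final reduction at a single $E$: the pointwise convergence $\phi_n^A(E) \to 0$ is superficially $G_{\delta\sigma\delta}$, and turning it into a genuine $G_\delta$ requires combining the subadditive structure of $g_n^A(E)$ with cocycle-specific single-step bounds rather than the bare Cauchy criterion.
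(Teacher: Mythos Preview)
Your reduction identity
\[
U(I,\Omega) \;=\; W(I,\Omega)\;\cap\;\bigcap_{E\in D}\{A : A(E,\cdot)\text{ is uniform}\}
\]
is correct and the argument for the inclusion $\supseteq$ (openness of uniform hyperbolicity, continuous dependence of the hyperbolic splitting on $(E,\omega)$, constancy of $\int f(E,\cdot)\,d\mu$ over invariant $\mu$ by density, and the generalized Oxtoby argument) is sound. This is a genuinely different route from the paper: the paper never restricts to a countable dense set of parameters, and instead shows directly that $U(I,\Omega)=\{A:\liminf_n \tfrac1n\Var_n(A)=0\}$, a manifestly $G_\delta$ description. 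The hard implication there (Proposition~\ref{p:inclusion-two}) is obtained from the Avalanche Principle via Lemma~\ref{l:consequence-avalanche}.

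The gap in your proposal is precisely the step you flag as ``the main obstacle'': the single-parameter $G_\delta$ statement. Your proposed tools---the Fekete monotone subsequence $g^A_{k!}(E)\searrow L^*(A,E)$ together with the slow-variation bound $|\phi^A_{n+1}(E)-\phi^A_n(E)|=O(1/n)$---do not suffice to reduce $\{\phi^A_n(E)\to 0\}$ from $G_{\delta\sigma\delta}$ to $G_\delta$. The bound $O(1/n)$ is compatible with $\phi_n$ oscillating indefinitely (a sequence behaving like $\sin(\log n)$ satisfies $|\phi_{n+1}-\phi_n|\le 1/n$ yet fails to converge), and Fekete only pins down $g_n$, not $h_n$. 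In particular you cannot interpolate from the subsequence $k!$ (or $2^k$) to all $n$ using slow variation, since the accumulated drift between consecutive terms of the subsequence is of order $\log k$ (resp.\ $\log 2$). What is genuinely needed is a \emph{lower} bound propagating $h_{n_k}\to L^*$ along a subsequence to all large $n$, and this is exactly the content of the Avalanche Principle as used in Proposition~\ref{p:inclusion-two}.

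There is, however, a way to complete your strategy without the Avalanche Principle, staying in the spirit of your own identity argument. On the open set of uniformly hyperbolic $B\in C(\Omega,\SL(2,\R))$, the expansion rate $f_B\in C(\Omega,\R)$ depends continuously on $B$ (continuity of the splitting), and for each invariant $\mu$ one has $L_\mu(B)=\int f_B\,d\mu$; hence $B\mapsto \Phi(B):=\sup_\mu L_\mu(B)-\inf_\mu L_\mu(B)$ is lower semicontinuous on the UH set, so $\{\text{UH and }\Phi>0\}$ is open. Since a cocycle has uniform behavior but fails to be uniform precisely when it is uniformly hyperbolic with $\Phi>0$ (cf.\ Proposition~\ref{p.uhimplications} and Remark~\ref{r.example}), one gets, for each fixed $E$,
\[
\{A:A(E,\cdot)\text{ uniform}\}\;=\;\{A:A(E,\cdot)\text{ has uniform behavior}\}\setminus\{A:A(E,\cdot)\text{ UH and }\Phi(A(E,\cdot))>0\},
\]
a $G_\delta$ set minus an open set, hence $G_\delta$. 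This repairs your argument and yields a proof that avoids the Avalanche Principle altogether---at the cost of invoking continuous dependence of the hyperbolic splitting---whereas the paper's approach is more self-contained but relies on Lemma~\ref{l:consequence-avalanche}.
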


\begin{remark}
(a) With the obvious modifications, parts (a) and (b) of Remark~\ref{r:after-main-theorem} apply here as well.
\\[1mm]
(b) For equicontinuous systems, it is known that the set of uniform
cocycles is a dense $G_\delta$-set (see Furman \cite{F97}).
\end{remark}

The results above are relevant in the study of spectral properties of discrete one-dimensional Schr\"odinger operators with dynamically defined potentials. Operators of this kind arise as follows. The set of continuous $f : \Omega\longrightarrow \R$ is denoted by $C(\Omega,\R)$. Any choice of an $f\in C(\Omega,\R)$, commonly referred to as a \emph{sampling function}, gives rise to \emph{potentials} $V_\omega(n) = f(T^n \omega)$, $\omega \in \Omega$, $n \in \Z$, and the associated \emph{Schr\"odinger operators}
$$
[H_\omega \psi] (n) = \psi(n+1) + \psi(n-1) + V_\omega(n) \psi(n)
$$
in $\ell^2(\Z)$. The spectral theory of such operators has been reviewed in \cite{D17} and it will be discussed in full detail in the forthcoming monographs \cite{DF21a, DF21b}. We refer to these works for details on the concepts and results discussed next.

If $\mu$ is a $T$-ergodic Borel probability measure, then the spectral properties of $H_\omega$ are $\mu$-almost surely independent of $\omega \in \Omega$. For example, there are sets $\Sigma, \Sigma_\mathrm{ac}, \Sigma_\mathrm{sc}, \Sigma_\mathrm{pp}$ such that $\sigma(H_\omega) = \Sigma$ and $\sigma_\bullet(H_\omega) = \Sigma_\bullet$, $\bullet \in \{ \mathrm{ac}, \mathrm{sc}, \mathrm{pp} \}$ for $\mu$-almost every $\omega \in \Omega$.

Several recent works have investigated the question of which
spectral properties are generic. One usually fixes the base dynamics
$(\Omega, T)$ and studies the set of $f \in C(\Omega,\R)$ for which
a certain spectral phenomenon occurs. For example, Avila and Damanik
showed in \cite{AD05} that $\{ f \in C(\Omega,\R) :
\Sigma_\mathrm{ac} = \emptyset \}$ is  a dense $G_\delta$-set for
any ergodic $\mu$, provided that $T$ is not periodic.\footnote{By
the standard theory of periodic Schr\"odinger operators, the result
clearly fails if the assumption is dropped.} A companion result was
obtained by Boshernitzan and Damanik in \cite{BD08}: $\{ f \in
C(\Omega,\R) : \Sigma_\mathrm{pp} = \emptyset \}$ is residual (i.e.,
it contains a dense $G_\delta$-set), provided that $(\Omega,T,\mu)$
has the metric repetition property. See \cite{BD08, BD09} for the
definition of this property and many examples, including shifts and
skew-shifts on tori.

The proofs of the results in \cite{AD05, BD08} just mentioned rely on approximation of $f$ by functions taking finitely many values. Realizing that the absence of point spectrum, as well as the absence of absolutely continuous spectrum, are phenomena that are quite well understood in the setting of sampling functions taking finitely many values, the results in \cite{AD05, BD08} then appear to be somewhat
natural.\footnote{It should be noted, however, that they were both initially quite surprising as one had previously expected the presence of absolutely continuous spectrum for small quasi-periodic potentials, and the presence of point spectrum for operators generated by the standard skew-shift $T(\omega_1,\omega_2) =
(\omega_1 + \alpha, \omega_1 + \omega_2)$.}


Let us discuss some key concepts underlying the general theory and the results just mentioned. A cocycle of the form
$$
\omega \mapsto \begin{pmatrix} g(\omega) & -1 \\ 1 & 0 \end{pmatrix}
$$
with $g \in C(\Omega,\R)$ is called a \emph{Schr\"odinger cocycle} and denoted by $A^g$. Given an operator family $\{ H_\omega \}_{\omega \in \Omega}$ as introduced above, the associated one-parameter family of Schr\"odinger cocycles $\{ A^{E - f} \}_{E \in \R}$ is intimately related to the study of the solutions of the associated difference equation
$$
u(n+1) + u(n-1) + V_\omega(n) u(n) = E u(n)
$$
and hence provides important information. The parameter $E$ is referred to as the \textit{energy} in this context.


We write
\begin{align*}
\mathcal{UH} & = \{ E \in \R : A^{E-f} \text{ is uniformly hyperbolic} \}, \\
\mathcal{Z} & = \{ E \in \R : L_\mu(A^{E-f}) = 0 \}, \\
\mathcal{NUH} & = \R \setminus (\mathcal{UH} \cup \mathcal{Z} ).
\end{align*}
Note that $\mathcal{Z}$ and $\mathcal{NUH}$ depend on the choice of ergodic measure $\mu$, while $\mathcal{UH}$ does not. This provides a ($\mu$-dependent) partition of the energy axis: $\R = \mathcal{UH} \sqcup \mathcal{NUH}  \sqcup \mathcal{Z} $.

Let us now relate the Lyapunov exponents with the spectra mentioned earlier. The Johnson-Lenz theorem \cite{J86, L02} states that
\begin{equation}\label{e.JL}
\mathcal{Z} \subseteq \Sigma \subseteq \mathcal{Z} \cup \mathcal{NUH}.
\end{equation}
Moreover,
\begin{equation}\label{e.JL2}
\supp \mu = \Omega \quad \Rightarrow \quad \Sigma = \mathcal{Z} \cup \mathcal{NUH}.
\end{equation}

Recall that the essential closure of a measurable set $M \subseteq \R$ is given by $\overline{M}^\mathrm{ess} = \{ E \in \R : \Leb(M
\cap (E - \varepsilon, E + \varepsilon) > 0 \text{ for every } \varepsilon > 0 \}$. The Ishii-Pastur-Kotani theorem
\cite{I73, K84, P80} (see also \cite{D07, K97} for an exposition) states that
\begin{equation}\label{e.IPK}
\Sigma_\mathrm{ac} = \overline{\mathcal{Z}}^\mathrm{ess}.
\end{equation}
Finally, if the potentials $\{ V_\omega \}$ take finitely many values and are $\mu$-almost surely aperiodic, then by Kotani \cite{K89}, we have
\begin{equation}\label{e.K}
\Leb(\mathcal{Z}) = 0,
\end{equation}
which by \eqref{e.IPK} implies that $\Sigma_\mathrm{ac} =  \emptyset$. The very general result \eqref{e.K} was alluded to in the discussion above as one of the general spectral phenomena in the setting of potentials taking finitely many values, and it forms the basis of the generic $C^0$ result from \cite{AD05} also mentioned above.

Note that under the assumption $\supp \mu = \Omega$ (which holds, e.g., when $T$ is minimal) \eqref{e.JL2} shows that $\Sigma = \mathcal{Z}$ if and only if $\mathcal{NUH} =\emptyset$. Now, for uniquely ergodic  dynamical systems uniform behavior is equivalent to uniformity, see appendix,  and $L_\mu(A) =0$ if and
only if $A$ is uniform with $L(A) =0$. Thus, for minimal uniquely ergodic dynamical systems we have
\begin{equation}\label{e.Zero-uniform}
\Sigma = \mathcal{Z} \Longleftrightarrow \mathcal{NUH}=\emptyset
\Longleftrightarrow A^{E-f} \text{ is uniform for all $E\in \R$}.
\end{equation}
For general systems it follows from the definitions and Remark~\ref{r.uniformvanishing} that
$$
\{ E \in \R : A^{E-f} \text{ has uniform behavior} \} = \mathcal{UH}
\cup \bigcap_{\mu \text{ ergodic}} \mathcal{Z}_\mu.$$
In other
words, uniform behavior fails for $A^{E-f}$ precisely when
$$
E\in \bigcup_{\mu \text{ ergodic}} \mathcal{NUH}_\mu.
$$
This gives
\begin{eqnarray*} \mathcal{NUH}_\mu = \emptyset  \text{
for each ergodic $\mu$ } &\Longleftrightarrow & A^{E-f} \text{ has
uniform behavior for all } E\in \R  \\
&\Longleftrightarrow & \Sigma_\mu = \mathcal{Z}_{\mathcal{U}}\text{
for each ergodic $\mu$}.
\end{eqnarray*}
Here, $$\mathcal{Z}_\mathcal{U}:=\{ E\in \R : A^{E-f} \text{
is uniform with } L(A^{E-f}) = 0\} =  \bigcap_{\mu \text{ ergodic}}
\mathcal{Z}_\mu.
$$

In any case, if the potentials $\{ V_\omega \}$ take finitely many
values, then \eqref{e.K} implies zero-measure spectrum whenever one
can show that $\mathcal{NUH} = \emptyset$. Thus, pursuing a proof of
the absence of non-uniformity is a natural approach to zero-measure
spectrum whenever a property such as \eqref{e.K} is known. This
approach is implemented in \cite{DL06a, L02}, as well as in the
present paper.

Let us mention that the zero-measure spectrum property has been investigated extensively for sampling functions taking finitely many values. From the classical results for the Fibonacci Hamiltonian \cite{S89} or the more general class of operators with Sturmian potentials \cite{BIST89} through numerous results for operators with potentials generated by substitutions to the general result
\cite{DL06a} by Damanik and Lenz, which covers many examples \cite{DL06b}, this is a spectral statement that is quite ubiquitous in this setting.

It has therefore been a very natural open problem to find conditions on the base dynamics $T : \Omega \to \Omega$ such that $\{ f \in C(\Omega,\R) : \Leb(\Sigma) = 0 \}$ is residual. The paper
\cite{ADZ14} by Avila, Damanik, and Zhang discusses this question in
the particular case $T : \R/\Z \to \R/\Z$, $\omega \mapsto \omega +
\alpha$, $\alpha \not\in \Q$, but fails to answer it. Instead,
\cite{ADZ14} proves the weaker result that the singularity of the
integrated density of states is generic in this setting.

Not only is the problem open in the case of irrational circle rotations, it is open in \emph{any} setting and hence one of our goals is to exhibit the first class of base dynamics $T : \Omega \to \Omega$ for which $\{ f \in C(\Omega,\R) : \Leb(\Sigma) = 0 \}$ is a dense $G_\delta$-set. At the same time we  will provide the
first class of aperiodic base dynamics for which $\{ f \in C(\Omega,\R) : \mathcal{NUH} = \emptyset\}$ or, equivalently, $\{f\in C(\Omega,\R) : \Sigma = \mathcal{Z}\}$ is a dense $G_\delta$-set. This is of interest as the equality $\Sigma = \mathcal{Z}$ is known in the periodic case and, hence, aperiodic dynamics giving this feature deserve particular attention.

We will work with aperiodic subshifts that satisfy the Boshernitzan
condition. Recall that a \emph{subshift} is a closed shift-invariant
subset $\Omega$ of $A^\Z$, where $A$ is a finite set carrying the
discrete topology and $A^\Z$ is endowed with the product topology.
The map $T : \Omega \to \Omega$ is given by the shift $(T \omega)_n
= \omega_{n+1}$, and it is clearly a homeomorphism. We say that a
subshift $\Omega$ satisfies the \emph{Boshernitzan condition} (B) if
it is minimal and there is a $T$-invariant Borel probability measure
$\mu$ such that
$$
\limsup_{n \to \infty} n \cdot \min \{ \mu([w]) : w \in \Omega_n \} > 0.
$$
Here $\Omega_n = \{ \omega_1 \ldots \omega_n : \omega \in \Omega \}$ is the set of words of length $n$ that occur in elements of $\Omega$ and $[w]$ is the cylinder set $[w] = \{ \omega \in \Omega : \omega_1 \ldots \omega_n = w \}$. This condition was introduced by Boshernitzan in \cite{B92} as a sufficient condition for unique ergodicity.

\begin{theorem}\label{t.main}
Suppose $\Omega$ is a subshift that satisfies the Boshernitzan
condition $\mathrm{(B)}$. Then, the following holds:

{\rm (a)} The set
$$
\{ f \in C(\Omega,\R) : \mathcal{NUH} = \emptyset \} = \{f\in C(\Omega,\R) : \Sigma = \mathcal{Z}\}
$$
is a dense $G_\delta$-set.

{\rm (b)} If $\Omega$ is furthermore aperiodic, then the set
$$
\{f\in C(\Omega,\R) : \Leb(\Sigma) =0\}
$$
is a dense $G_\delta$-set.
\end{theorem}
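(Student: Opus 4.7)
The plan is to combine the abstract $G_\delta$ result of Theorem~\ref{t:main-abstract} with the Damanik--Lenz uniformity theorem for Schr\"odinger cocycles over Boshernitzan subshifts with locally constant sampling functions, and to add Kotani's theorem \eqref{e.K} to upgrade to zero-measure spectrum in part (b). Throughout, recall that by \cite{B92} the Boshernitzan condition forces $\Omega$ to be minimal and uniquely ergodic, so the equivalences summarized in \eqref{e.Zero-uniform} are available.

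For part (a), consider the continuous affine map
$$
\Phi : C(\Omega,\R) \longrightarrow C(\R \times \Omega, \SL(2,\R)), \qquad \Phi(f)(E,\omega) = \begin{pmatrix} E - f(\omega) & -1 \\ 1 & 0 \end{pmatrix}.
$$
Unique ergodicity makes uniform behavior coincide with uniformity, and \eqref{e.Zero-uniform} identifies
$$
\{f : \mathcal{NUH} = \emptyset\} = \{f : \Sigma = \mathcal{Z}\} = \Phi^{-1}\bigl(U(\R,\Omega)\bigr).
$$
By Theorem~\ref{t:main-abstract} this is a $G_\delta$ in $C(\Omega,\R)$. For density, approximate $f$ uniformly by locally constant sampling functions $g$, which are dense since $\Omega$ is totally disconnected, and invoke the main theorems of \cite{DL06a, L02}: for a Boshernitzan subshift and any locally constant $g$, the cocycle $A^{E-g}$ is uniform at every $E\in\R$, so $g$ lies in the above set.

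For part (b), I would first extract the $G_\delta$ property directly from Hausdorff upper semi-continuity of the spectrum. For $N\ge 1$ set
$$
O_N \defeq \{f \in C(\Omega,\R) : \text{there exists an open } U \supseteq \Sigma(f) \text{ with } \Leb(U) < 1/N\}.
$$
If $f_0 \in O_N$ with cover $U$, then $\delta \defeq \dist(\Sigma(f_0), \R \setminus U) > 0$; since $H_\omega^{(f)} - H_\omega^{(f_0)}$ is multiplication by $(f - f_0)\circ T^n$ of operator norm at most $\|f - f_0\|_\infty$, a standard perturbation estimate gives $\Sigma(f) \subseteq \Sigma(f_0) + (-\delta, \delta) \subseteq U$ whenever $\|f - f_0\|_\infty < \delta$. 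Hence $O_N$ is open and $\{f : \Leb(\Sigma(f)) = 0\} = \bigcap_{N\ge 1} O_N$ is $G_\delta$. For density, approximate $f$ by locally constant $g$: the potentials $V_\omega^{(g)}$ take finitely many values and, by aperiodicity and minimality of $\Omega$ together with appropriate choice of $g$, are $\mu$-almost surely aperiodic, so Kotani's theorem \eqref{e.K} gives $\Leb(\mathcal{Z}) = 0$; combined with $\Sigma = \mathcal{Z}$ from the density argument of part (a), this forces $\Leb(\Sigma(g)) = 0$. The main substantive hurdle is supplying the uniformity of $A^{E-g}$ for all $E$ and all locally constant $g$ over a Boshernitzan subshift; the remainder is a mechanical assembly of these deep inputs with the abstract $G_\delta$ results and the semi-continuity of the spectrum.
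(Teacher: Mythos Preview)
Your proof is correct and follows essentially the same approach as the paper: part~(a) is identical (pull back $U(\R,\Omega)$ along the continuous Schr\"odinger map, invoke Theorem~\ref{t:main-abstract}, and get density from locally constant sampling functions via \cite{DL06a}), and in part~(b) your direct $G_\delta$ argument via Hausdorff semi-continuity of the spectrum is precisely the content of the paper's Proposition~\ref{p.semicont}/Corollary~\ref{c.gdeltaset}, while your density argument combining Kotani's theorem with $\Sigma=\mathcal{Z}$ is exactly what is packaged in the paper's Lemma~\ref{l:input-omega}.
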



The theorem has the following immediate consequence.

\begin{coro}\label{c.main}
Suppose $\Omega$ is an aperiodic subshift that satisfies the
Boshernitzan condition $\mathrm{(B)}$. Then, zero-measure spectrum
given by the vanishing set of the Lyapunov exponent  is generic,
that is, $$\{ f \in C(\Omega,\R) : \Leb(\Sigma) = 0 \mbox{ and }
\Sigma = \mathcal{Z} \}$$
 is a dense $G_\delta$-set.
\end{coro}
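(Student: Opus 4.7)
The plan is to observe that the corollary really is, as advertised, an \emph{immediate} consequence of Theorem~\ref{t.main}, via an application of the Baire category theorem. So the work to be done is essentially bookkeeping; there is no new analytic idea required.

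First I would note that $C(\Omega,\R)$, equipped with the supremum norm (well-defined since $\Omega$ is a compact metric space), is a complete metric space and hence a Baire space. Next, set
\[
\mathcal{A} := \{ f \in C(\Omega,\R) : \Sigma = \mathcal{Z} \}, \qquad \mathcal{B} := \{ f \in C(\Omega,\R) : \Leb(\Sigma) = 0 \}.
\]
By Theorem~\ref{t.main}(a), $\mathcal{A}$ is a dense $G_\delta$-set in $C(\Omega,\R)$; by Theorem~\ref{t.main}(b), which uses the additional aperiodicity hypothesis on $\Omega$ that we have assumed, $\mathcal{B}$ is also a dense $G_\delta$-set.

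Then I would simply assemble these two facts. The set in question is $\mathcal{A} \cap \mathcal{B}$. Since the countable intersection of $G_\delta$-sets is a $G_\delta$-set (rewrite each as a countable intersection of open sets and take the union of the two collections), $\mathcal{A} \cap \mathcal{B}$ is a $G_\delta$-set. Density follows from the Baire category theorem: in the Baire space $C(\Omega,\R)$, the intersection of two (in fact countably many) dense $G_\delta$-sets is dense. This completes the proof.

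There is no real obstacle here; if anything, the only subtle point worth flagging explicitly is that the equality $\Sigma = \mathcal{Z}$ in the statement of the corollary is exactly the defining condition of $\mathcal{A}$ as given in Theorem~\ref{t.main}(a), so that the two formulations appearing in part (a) of that theorem can be used interchangeably. All the genuine content—namely, the $G_\delta$-ness coming from Theorems~\ref{t:general-general} and \ref{t:main-abstract}, and the density coming from approximation by sampling functions with finitely many values in combination with Kotani's theorem~\eqref{e.K}—has already been done in the proof of Theorem~\ref{t.main}.
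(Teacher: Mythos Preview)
Your proposal is correct and matches the paper's approach exactly: the paper does not give a separate proof of this corollary but simply introduces it as ``the following immediate consequence'' of Theorem~\ref{t.main}, and what you have written is precisely the routine Baire-category argument that unpacks this phrase. There is nothing to add.
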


\begin{remark}\label{r.afterCorollary1.7}
(a) It is well known that the spectrum is always closed and, in the
dynamically defined setting we consider, it never contains any
isolated points. Thus, Corollary~\ref{c.main} shows that Cantor
spectrum of zero Lebesgue measure is generic when the base dynamics
is given by an aperiodic subshift that satisfies $\mathrm{(B)}$.
\\[1mm]
(b) As pointed out above, if the subshift $\Omega$ satisfies (B),
then it is uniquely ergodic by \cite[Theorem~1.2]{B92}. For this
reason there is no ambiguity in writing $\Sigma$ without specifying
$\mu$. On the other hand, the minimality of $\Omega$ and the
continuity of the sampling functions $f$ in question also imply the
independence of the spectrum of $\omega$, so that in the setting of
Theorem~\ref{t.main}, $\sigma(H_\omega) = \Sigma$ for every $\omega
\in \Omega$, not merely for $\mu$-almost every $\omega \in \Omega$.
\\[1mm]
(c) Many important classes of subshifts satisfy (B); see
\cite{DL06b} for a detailed discussion.
\\[1mm]
(d) It remains very interesting to clarify whether zero-measure
spectrum is ($C^0$-) generic for quasi-periodic potentials, or at
least for one-frequency quasi-periodic potentials.
\end{remark}

Finally, we note that our general result, Theorem~\ref{t:main-abstract}, can also be seen in the context of a question of Walters on existence of non-uniform cocycles.
Specifically, Walters asks in \cite{Wal1} whether every uniquely ergodic dynamical system (with non-atomic invariant measure) allows for a non-uniform cocycle. Walters discusses some examples,  where the answer is affirmative.  The question in general  seems to still be open with further partial results contained in \cite{F97}. In
this situation, the following consequence of (the proof of) our spectral results may be of interest.

\begin{coro}\label{c:walters}
Suppose $\Omega$ is an aperiodic subshift that satisfies the Boshernitzan condition $\mathrm{(B)}$. Then, the set of uniform cocycles is a dense $G_\delta$-set.
\end{coro}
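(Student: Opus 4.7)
The plan is to prove the two parts of the claim --- that the set of uniform cocycles in $C(\Omega,\SL(2,\R))$ is $G_\delta$ and that it is dense --- separately. The $G_\delta$ assertion is immediate from Theorem~\ref{t:main-abstract}: taking $I$ to be a single point $\{E_0\}$, the space $C(I\times\Omega,\SL(2,\R))$ is canonically identified with $C(\Omega,\SL(2,\R))$, under which $U(I,\Omega)$ becomes exactly the set of uniform cocycles. Theorem~\ref{t:main-abstract} then yields the $G_\delta$ conclusion.

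For the density part, the strategy is to identify a dense subclass of $C(\Omega,\SL(2,\R))$ whose members are automatically uniform under hypothesis (B), namely the \emph{locally constant} cocycles: those $A : \Omega \to \SL(2,\R)$ for which there exists $N$ such that $A(\omega)$ depends only on $\omega_{-N}\ldots\omega_N$. Since $\Omega$ is a subshift on a finite alphabet, the cylinders of length $2N+1$ form a finite clopen partition of $\Omega$, and uniform continuity of any $A\in C(\Omega,\SL(2,\R))$ on the compact space $\Omega$ allows one to approximate $A$ in the sup-norm to arbitrary precision by choosing a single value in each such cylinder; hence locally constant cocycles are dense in $C(\Omega,\SL(2,\R))$.

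It therefore suffices to show that every locally constant $\SL(2,\R)$-cocycle over a subshift satisfying (B) is uniform, which is the content of the Damanik--Lenz theorem in \cite{DL06a}: the combinatorial structure forced by (B) --- sufficiently long word repetitions with controlled frequency --- drives a subadditivity-based argument that upgrades convergence of $\frac{1}{n}\log\|A_n(\omega)\|$ from pointwise to uniform. This is the main substantive input and the only genuine obstacle; the other two steps are routine. Since the same machinery is invoked in the proof of Theorem~\ref{t.main}(a), it is available to us at no additional cost, and the aperiodicity hypothesis plays no role beyond matching the setting of the previous corollary.
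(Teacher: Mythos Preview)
Your proof is correct and follows essentially the same approach as the paper: the paper cites Corollary~\ref{c:abstract-general}(b) with $I$ a single point, which itself bundles exactly your three ingredients --- Theorem~\ref{t:main-abstract} for the $G_\delta$ property, density of locally constant cocycles, and the Damanik--Lenz uniformity result (packaged there as Lemma~\ref{l:input-omega}). Your remark that aperiodicity plays no role in the argument is also accurate.
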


\begin{remark}
Based on these considerations we feel that aperiodic Boshernitzan subshifts are the best candidates for a potential negative answer to Walters' question, but at this time we are unable to extend the uniformity result to all continuous cocycles over a Boshernitzan subshift.
\end{remark}

The paper is organized as follows. We prove
Theorem~\ref{t:general-general} in Section~\ref{s.3} and
Theorem~\ref{t:main-abstract} in Section~\ref{s.4}. We then provide
a result on semicontinuity of the measure of the spectrum for
general dynamical systems in Section~\ref{s.6} and a result on
denseness of cocycles for subshifts in Section \ref{s.7}.  In
Section~\ref{s.5} we then  derive Theorem~\ref{t.main} from results
in the earlier sections. That section contains also the  proof of
Corollary~\ref{c:walters}.  Finally, there are two appendices, one
discussing the relationships between the uniformity notions we
consider, and one discussing a consequence of the Avalanche
Principle that we need in the earlier sections.

\section*{Acknowledgment}

Our original version of part (b) of  Theorem~\ref{t.main} was based on Theorem~\ref{t.main2}. We are indebted to Jake Fillman for pointing out that the ideas in \cite{DFL17} should make the proof possible that we now present.



\section{Cocycles With Uniform Behavior as a $G_\delta$-Set}\label{s.3}

In this section we prove Theorem~\ref{t:general-general}. That is, we show that the set of cocycles with uniform behavior is a $G_\delta$-set, and in fact we prove this result for families of cocycles depending on one real parameter.

We start with  a simple observation.

\begin{lemma}\label{l:aux-zwei}
Let $(\Omega,T)$ be a dynamical system and $A \in C(\Omega, \SL(2,\R))$. If there exist $L > 0$ and $k \in \N$ with $M := \max \{ \frac{1}{k} \log \|A_k(\omega)\| : \omega \in \Omega\} < L$, then
$$
\frac{1}{n} \log\|A_n (\omega)\| < L
$$
for all $\omega \in \Omega$ and
$$
n \geq \frac{2 k \max \{ \log \|A(\omega)\| : \omega\in \Omega\}}{L- M}.
$$
\end{lemma}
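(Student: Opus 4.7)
The plan is to exploit the submultiplicativity $\|A_{m+\ell}(\omega)\| \leq \|A_\ell(T^m\omega)\| \cdot \|A_m(\omega)\|$ (which follows from the cocycle identity $A_{m+\ell}(\omega) = A_\ell(T^m\omega) A_m(\omega)$) in order to bootstrap the uniform bound at the single scale $k$ into a bound at every sufficiently large scale $n$. Write $K \defeq \max\{\log\|A(\omega)\| : \omega \in \Omega\}$, which is finite by continuity of $A$ and compactness of $\Omega$; note $K \geq 0$ and $M \geq 0$ since every element of $\SL(2,\R)$ has norm at least one.

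Given $n$, I would perform Euclidean division $n = qk + r$ with $0 \leq r < k$ and iterate the cocycle identity to write
$$
A_n(\omega) \;=\; A_r(T^{qk}\omega)\cdot A_k(T^{(q-1)k}\omega) \cdots A_k(T^k\omega)\cdot A_k(\omega).
$$
Taking norms, using the hypothesis $\log\|A_k(\cdot)\| \leq kM$ uniformly on $\Omega$, and using the crude bound $\log\|A_r(\cdot)\| \leq rK \leq kK$ (which follows by $r$-fold submultiplicativity applied to $A$ itself), I obtain
$$
\log\|A_n(\omega)\| \;\leq\; qkM + rK \;\leq\; qkM + kK.
$$

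Dividing by $n$ and using $qk/n \leq 1$ together with $M \geq 0$ yields the key estimate
$$
\frac{1}{n}\log\|A_n(\omega)\| \;\leq\; M + \frac{kK}{n}.
$$
It remains to choose $n$ so that the right-hand side is strictly less than $L$. Since $L - M > 0$, the threshold $n \geq \frac{2kK}{L-M}$ ensures $\frac{kK}{n} \leq \frac{L-M}{2}$, giving $\frac{1}{n}\log\|A_n(\omega)\| \leq \frac{L+M}{2} < L$, as required. There is no serious obstacle here; the only minor subtlety is to keep track of the two separate contributions (the $q$ full blocks of length $k$, which are controlled by $M$, versus the remainder of length $r < k$, which must be absorbed by the crude bound $K$), and the factor $2$ in the statement provides the slack needed to turn the inequality strict.
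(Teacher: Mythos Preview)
Your proof is correct and essentially identical to the paper's: both write $n = qk + r$, factor $A_n$ via the cocycle identity into $q$ blocks of length $k$ plus a remainder of length $r<k$, bound the blocks by $M$ and the remainder by the one-step constant $K$, and conclude $\frac{1}{n}\log\|A_n(\omega)\| \le M + \frac{kK}{n} \le \frac{L+M}{2} < L$ once $n \ge \frac{2kK}{L-M}$.
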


\begin{proof}
Set  $N := \frac{2  k  \max \{ \log \|A(\omega)\| : \omega \in \Omega\}}{L- M}$. By definition of $N$, we have
\begin{equation}\label{hilfe}
\frac{1}{N} \log \|A_r(\omega)\| \leq  \frac{L-M}{2}
\end{equation}
for all $\omega \in \Omega$, $r = 0, \ldots, k$. Clearly, this estimate continues to hold if $N$ is replaced by any $n\geq N$.

Consider now an $n \in \N$ with $n \geq N$. Of course,  $n$ can be uniquely written in the form $n = s k + r$ with $s \in \N\cup \{0\}$ and $0 \leq r < k$. By construction of the cocycle, we obtain
$$
A_{n}(\omega) = A_r (T^{s k} \omega) A_k (T^{(s-1)k} \omega) \ldots A_k (T^k\omega) A_k(\omega).
$$
Taking logarithms and using submultiplicativity of the matrix norm and additivity of the logarithm, we find
\begin{eqnarray*} \frac{1}{n}\log\|A_n (\omega)\| &\leq &
\frac{1}{n} \log\|A_r (T^{sk}\omega)\| + \frac{1}{n}\sum_{j=0}^{s-1}
\log\|A_k (T^{jk} \omega)\|\\
\eqref{hilfe} &\leq& \frac{L-M}{2} + \frac{1}{n}\sum_{j=0}^{s-1} k
\frac{\log\|A_k (T^{jk} \omega)\|}{k}\\
(\mbox{definition of $M$}) &\leq & \frac{L-M}{2} +
\frac{1}{n}\sum_{j=0}^{s-1} k M\\
(sk \leq n) &=& \frac{L-M}{2} + M \\
(M<L) &< & L
\end{eqnarray*}
This finishes the proof.
\end{proof}

\begin{proof}[Proof of Theorem \ref{t:general-general}]
We first consider a compact interval $I$.

For $\varepsilon > 0$, we define $W_\varepsilon$ to be the set of $A \in C(I \times \Omega, \SL(2,\R))$ such that for each $E \in I$, the cocycle $A(E,\cdot)$ is uniformly hyperbolic or there exists a $k \in \N$ with $\frac{1}{n} \log \|A_n (\omega)\| < \varepsilon$ for all $\omega \in \Omega$ and $n \geq k$. Clearly,
$$
W = \bigcap_{m\in\N} W_{\frac{1}{m}}.
$$
Thus it suffices to show that $W_\varepsilon$ is open for any $\varepsilon > 0$. To do so, we consider $E \in I$ arbitrary. There are two cases:

\smallskip

\textit{Case 1: $A(E,\cdot)$ is uniformly hyperbolic.} As is well-known, the set of uniformly hyperbolic cocycles is open (see, e.g., \cite{Z19}). As $A$ is continuous in the first variable, there exists a $\delta > 0$ such that any $B \in C(I \times \Omega, \SL(2,\R))$ close enough to $A$ will have the property that $B(E',\cdot)$ is uniformly hyperbolic for all $E' \in (E-\delta,E+\delta) \cap I$.

\smallskip

\textit{Case 2:  $A(E,\cdot)$ satisfies $\frac{1}{k} \log \|A_k (E,\omega)\| < \varepsilon$ for all $\omega \in \Omega$ for some $k \in \N$.} By continuity of $A$ and compactness of $\Omega$, there exists a $\delta > 0$ with
$$
\sup_{\omega \in \Omega, E' \in (E-\delta, E+\delta) \cap I} \frac{1}{k} \log \|A_k (E',\omega)\| < \varepsilon.
$$
This same inequality will then also hold for any $B \in C(I \times \Omega, \SL(2,\R))$ sufficiently close to $A$. By Lemma~\ref{l:aux-zwei} there exists then an $N \in \N$ with
$$
\frac{1}{n} \log \|B_n (E',\omega)\| < \varepsilon
$$
for all $\omega \in \Omega$, $n \geq N$ and $E'\in (E - \delta , E + \delta) \cap I$ for all such $B$.

\smallskip

So, in both  of these two cases there is an open neighborhood $(E - \delta, E + \delta) \cap I$ of $E$ such that any $B$ sufficiently close to $A$ shares the respective property of $A(E,\cdot)$ for all $E'$ in this neighborhood. As $I$ is compact, the openness of $W_\varepsilon$ then follows by standard reasoning.

\smallskip

We now consider an arbitrary interval $I$ in $\R$. We can write $I$ as a countable union of compact intervals $I_n$, i.e.\ $I = \bigcup_{n \in \N}  I_n$. By what we have shown already, $W(I_n,\Omega)$ is a $G_\delta$-set for each $n \in \N$. For any $n \in \N$, there is the canonical embedding  $j_n : I_n \times \Omega \longrightarrow I \times \Omega, (E,\omega) \mapsto (E,\omega)$, and the associated restriction map
$$
R_n : C(I\times \Omega,\SL(2,\R)) \longrightarrow C(I_n \times \Omega, \SL(2,\R)), \; A \mapsto A \circ j_n.
$$
Then, $R_n$ is continuous. Hence, $R_n^{-1} (W(I_n,\Omega))$ is a $G_\delta$-set for each $n \in \N$  (as the inverse image of a $G_\delta$-set under a continuous map) and so is then
$$
W(I,\Omega) = \bigcap_{n \in \N} R_n^{-1} (W(I_n,\Omega)).
$$
This finishes the proof.
\end{proof}

\section{Uniform Cocycles are a $G_\delta$-Set}\label{s.4}

In this section we prove Theorem~\ref{t:main-abstract}. A pertinent
idea is that for a uniquely ergodic dynamical system $(\Omega,T)$, a
continuous $B : \Omega \to \SL(2,\R)$ is uniform if and only if
\begin{equation}\label{e.uniformitycondition}
\lim_{n\to \infty} \frac{1}{n} \sup_{\omega, \varrho \in \Omega} \left| \log \|B_n (\omega)\| - \log \|B_n (\varrho)\| \right| = 0.
\end{equation}

Indeed, it is clear that any uniform $B$ will satisfy \eqref{e.uniformitycondition}. Conversely, any $B$ satisfying \eqref{e.uniformitycondition} must be uniform as there exists (by the subadditive ergodic theorem) an $\omega_0 \in \Omega$ with $\lim_{n \to \infty} \frac{1}{n} \log \|B_n (\omega_0)\| = L(B)$. Some additional work will be needed to deal with the dependence on the parameter.

We start with two  auxiliary statements. For the convenience of the
reader we include sketches of the proofs.

\begin{lemma}\label{l:aux}
Let $(\Omega,T)$ be a dynamical system and $A \in C(\Omega,\SL(2,\R)$ be arbitrary.
\begin{itemize}

\item[(a)] If there exist $L > 0$ and $N \in \N$ with $\frac{1}{k} \log \|A_k(\omega)\| < L$ for all $\omega \in \Omega$ and $k = N,\ldots, 2N$, then
$$
\frac{1}{n} \log\|A_n (\omega)\| < L
$$
for all $\omega \in \Omega$ and $n \geq N$.

\item[(b)] Let $c := \max_{\omega \in \Omega} \{ \log \|A(\omega)\|, \log \|A^{-1}(\omega)\| \}$. Then, for any $n \in \N$,
$$
\left| \frac{\log \|A_{n+1}(\omega)\|}{n+1} - \frac{\log\|A_n (\omega)\|}{n}\right| \leq \frac{1}{n+1} \frac{\log\|A_n (\omega)\|}{n} + \frac{c}{n+1}.
$$
\end{itemize}
\end{lemma}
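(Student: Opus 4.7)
For part (a), my plan is to use submultiplicativity of the matrix norm together with a block decomposition of the cocycle in which every block has length in the hypothesized range $\{N,\ldots,2N\}$. Given $n \geq N$, I would write $n = qN + r$ with $0 \leq r < N$ and $q \geq 1$. If $q = 1$, then $N \leq n < 2N$ and the desired estimate is immediate from the hypothesis. If $q \geq 2$, the cocycle identity gives
$$
A_n(\omega) = A_{N+r}\bigl(T^{(q-1)N}\omega\bigr) \cdot A_N\bigl(T^{(q-2)N}\omega\bigr) \cdots A_N\bigl(T^N\omega\bigr) \cdot A_N(\omega),
$$
where $N+r \in \{N,\ldots,2N-1\}$ and every factor of length $N$ is likewise in range. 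Taking logarithms, using submultiplicativity, and applying the hypothesis to each factor yields
$$
\log \|A_n(\omega)\| < (N+r)L + (q-1)NL = nL,
$$
which is the claim.

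For part (b), I would proceed by a direct computation. The cocycle relations $A_{n+1}(\omega) = A(T^n\omega) A_n(\omega)$ and $A_n(\omega) = A(T^n\omega)^{-1} A_{n+1}(\omega)$, combined with submultiplicativity and the definition of $c$, immediately yield the one-step bound
$$
\bigl|\log\|A_{n+1}(\omega)\| - \log\|A_n(\omega)\|\bigr| \leq c.
$$
I would then use the algebraic identity
$$
\frac{\log\|A_{n+1}(\omega)\|}{n+1} - \frac{\log\|A_n(\omega)\|}{n} = \frac{\log\|A_{n+1}(\omega)\| - \log\|A_n(\omega)\|}{n+1} - \frac{1}{n+1}\cdot\frac{\log\|A_n(\omega)\|}{n},
$$
apply the triangle inequality, and invoke $\log\|A_n(\omega)\| \geq 0$ (which holds because $A_n(\omega) \in \SL(2,\R)$ forces $\|A_n(\omega)\| \geq 1$) to arrive at the stated bound.

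Neither part presents a serious obstacle; both follow from elementary submultiplicativity manipulations. The only bookkeeping point of care lies in (a): the decomposition must be arranged so that every block length stays inside the hypothesized range $\{N,\ldots,2N\}$. Taking $q-1$ blocks of length exactly $N$ together with one block of length $N+r \in \{N,\ldots,2N-1\}$ achieves this, whereas a naive decomposition into $q$ blocks of length $N$ with a trailing remainder of length $r < N$ would fall outside the range. Part (b) is essentially bookkeeping around the trivial telescoping identity.
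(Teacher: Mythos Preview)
Your proof is correct and essentially the same as the paper's. For (a), the paper uses the equivalent decomposition $n = kN + r$ with $N \le r \le 2N-1$ (i.e., your $k = q-1$, $r$ replaced by $N+r$) and appeals to submultiplicativity exactly as you do; for (b), the paper likewise derives the one-step bound $|\log\|A_{n+1}(\omega)\| - \log\|A_n(\omega)\|| \le c$ from $\|BC\| \le \|B\|\|C\|$ and $\|C\| \le \|B^{-1}\|\|BC\|$ and then performs the same algebraic rearrangement you give.
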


\begin{proof}
(a) Consider $n \geq N$. Then, we can uniquely write $n$ in the form $n = k N + r$ with $k \in \N \cup \{0\}$ and $N \leq r \leq 2 N-1$. Now, the proof follows similar lines as the proof of Lemma~\ref{l:aux-zwei}.

\smallskip

(b) For invertible matrices $C,B$, we clearly have $\|B C\| \leq \|B\| \|C\|$ and $\|C\| = \|B^{-1} B C\| \leq \|B^{-1}\| \|B C\|$. Applying this with $C = A_n (\omega)$ and $B = A(T^n \omega)$, we infer (b) after a short computation.
\end{proof}

\begin{remark}
It follows from part (a) of the lemma that for any $L > 0$ and $N \in \N$, the set of $A \in C(\Omega,\SL(2,\R))$ with $\sup_{\omega \in \Omega, n \geq N} \frac{1}{n} \log\|A_n (\omega)\| < L$ is open.
\end{remark}

\smallskip

We now  show that the pointwise uniformity of the $A(E,\cdot)$ appearing in the definition of $U(I,\Omega)$ can be replaced by a uniform uniformity when $I$ is compact. This is the content of the next proposition.

\begin{prop}\label{p:inclusion-one}
Let $(\Omega,T)$ be a  dynamical system. Let $I \subset \R$ be a compact interval. Consider $A \in U(I, \Omega)$. Then, for any $\varepsilon > 0$, there exists $N \in \N$ with
$$
\frac{1}{n}| \log \|A_n(E,\omega)\| - \log \|A_n (E,\varrho)\| | < \varepsilon
$$
for all $\omega, \varrho \in \Omega$, $E \in I$, and $n \geq N$.
\end{prop}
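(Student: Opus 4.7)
The plan is to work with the continuous functions $f_n(E,\omega) := \frac{1}{n}\log\|A_n(E,\omega)\|$, $u_n(E) := \sup_{\omega \in \Omega} f_n(E,\omega)$, and $\ell_n(E) := \inf_{\omega \in \Omega} f_n(E,\omega)$; these are continuous in $E$ by compactness of $I \times \Omega$ and continuity of $A$, satisfy $0 \leq f_n \leq c := \sup_{I \times \Omega} \log \|A\| < \infty$ (the lower bound coming from $\|B\| \geq 1$ for every $B \in \SL(2,\R)$), and by the hypothesis $A \in U(I,\Omega)$ converge pointwise to $L(A(E,\cdot))$ for each fixed $E$. Hence $g_n := u_n - \ell_n \to 0$ pointwise on $I$, and the conclusion of the proposition is exactly that this convergence is uniform. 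By compactness of $I$, it suffices to produce, for each $\eta > 0$ and each $E_0 \in I$, an open neighborhood $V_{E_0}$ of $E_0$ and an $N_{E_0} \in \N$ such that $g_n(E) < \eta$ whenever $E \in V_{E_0}$ and $n \geq N_{E_0}$; a finite subcover then gives the global $N$.

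To obtain such local data at $E_0$, I would split on whether $L(A(E_0, \cdot))$ vanishes. If $L(A(E_0, \cdot)) = 0$, then uniformity at $E_0$ forces $u_n(E_0) \to 0$, so I pick $k_0$ with $u_{k_0}(E_0) < \eta/4$; continuity of $u_{k_0}$ in $E$ yields an open $V_{E_0}$ on which $u_{k_0}(E) < \eta/2$. Applying Lemma~\ref{l:aux-zwei} to the cocycle $A(E, \cdot)$ with $L = \eta/2$, and using that $\max_\omega \log\|A(E,\omega)\|$ is bounded by $c$ uniformly over $E \in I$, produces a single $N_{E_0}$ for which $u_n(E) < \eta/2$ whenever $E \in V_{E_0}$ and $n \geq N_{E_0}$. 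Since $\ell_n \geq 0$ everywhere, this immediately gives $g_n(E) \leq u_n(E) < \eta$ on $V_{E_0}$ for $n \geq N_{E_0}$.

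In the complementary case $L(A(E_0, \cdot)) > 0$, the cocycle $A(E_0,\cdot)$ is uniformly hyperbolic. By openness of uniform hyperbolicity (already invoked in the proof of Theorem~\ref{t:general-general}, Case~1) together with continuity of $A$ in $E$, I can find an open $V_{E_0}$ and a constant $L' > 0$ such that $A(E, \cdot)$ is uniformly hyperbolic with rate at least $L'$ and with uniform constants for the invariant hyperbolic splitting for every $E \in V_{E_0}$. The consequence of the Avalanche Principle recorded in the second appendix (or equivalently the standard quantitative theory of UH $\SL(2,\R)$ cocycles) then yields the bound $|f_n(E, \omega) - L(A(E, \cdot))| = O(1/n)$ uniformly in $(E, \omega) \in V_{E_0} \times \Omega$, from which $g_n = O(1/n)$ uniformly on $V_{E_0}$ and any sufficiently large $N_{E_0}$ works.

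The main obstacle is this second case: Lemma~\ref{l:aux-zwei} controls $u_n$ from above in a neighborhood, but there is no analogous submultiplicative argument producing a matching uniform lower bound on $\ell_n$, because matrices in $\SL(2,\R)$ with prescribed individual norms can have products of much smaller norm due to cancellation. This is precisely where the Avalanche Principle is essential, providing quantitative control of products of uniformly hyperbolic matrices. In the vanishing-exponent case the difficulty evaporates, since the trivial inequality $\ell_n \geq 0$ supplies the lower bound for free, and only the upper bound on $u_n$ from Lemma~\ref{l:aux-zwei} is needed.
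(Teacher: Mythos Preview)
Your outline mirrors the paper's proof: reduce by compactness of $I$ to a local statement at each $E_0\in I$, split on whether $L(A(E_0,\cdot))$ vanishes, handle the zero case by subadditivity (the paper uses Lemma~\ref{l:aux}(a) where you use Lemma~\ref{l:aux-zwei}, but either works), and handle the positive case via the Avalanche consequence Lemma~\ref{l:consequence-avalanche}.

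There is, however, one genuine inaccuracy in your positive-$L$ case. Neither Lemma~\ref{l:consequence-avalanche} nor ``standard UH theory'' delivers a rate $|f_n(E,\omega)-L(A(E,\cdot))|=O(1/n)$: the convergence of $\tfrac1n\log\|A_n\|$ to the Lyapunov exponent is governed by Birkhoff averages of the expansion along the unstable direction, and these carry no a priori rate; worse, for a non-uniquely-ergodic base, uniform hyperbolicity of $A(E,\cdot)$ by itself does not even force uniformity (cf.\ Remark~\ref{r.example}), so ``uniform constants for the splitting on $V_{E_0}$'' is not enough. What Lemma~\ref{l:consequence-avalanche} actually gives---once you observe, as in Remark~\ref{r:nach-avalanche-lemma}(c), that its hypotheses (a1)--(a4) are \emph{open} in the cocycle and are satisfied by the uniform cocycle $A(E_0,\cdot)$ for arbitrarily small $\varepsilon$---is the two-sided bound $f_n(E,\omega)\in[L_0(1-44\varepsilon'),\,L_0(1+\varepsilon')]$ for all $n\ge\ell$, all $\omega\in\Omega$, and all $E$ in a neighborhood of $E_0$, with the \emph{fixed} number $L_0=L(A(E_0,\cdot))$. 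This yields $g_n(E)\le 45L_0\varepsilon'$ on that neighborhood for $n\ge\ell$, which is precisely what you need; simply replace the $O(1/n)$ claim by this estimate. (A minor bookkeeping point in the zero-$L$ case: applying Lemma~\ref{l:aux-zwei} with $L=\eta/2$ while only knowing $u_{k_0}<\eta/2$ does not produce a uniform $N_{E_0}$, since $L-M$ is not bounded below; either shrink $V_{E_0}$ so that $u_{k_0}<\eta/4$, or apply the lemma with $L=\eta$, which already suffices since $g_n\le u_n$.)
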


\begin{proof}

As $I$ is compact, it suffices to find for each $E \in I$, a $\delta > 0$ such that the desired estimate holds in $(E - \delta, E + \delta) \cap I$. We consider two cases:

\smallskip

\textit{Case 1: $A(E,\cdot)$ is uniform with $L(A(E,\cdot)) > 0$}. The proof follows from Lemma~\ref{l:consequence-avalanche} in the following way: Assume without loss of generality $\frac{\varepsilon}{46 L} < \frac{1}{12}$. By uniformity of $A(E,\cdot)$, there exists $N \in \N$ such that the assumptions of Lemma~\ref{l:consequence-avalanche} will be satisfied with $L = L(A(E,\cdot))$, $\ell = N$ and $\frac{\varepsilon}{ 47 L}< \frac{1}{12}$ instead of $\varepsilon$. Now, as discussed in part (c) of Remark~\ref{r:nach-avalanche-lemma}, the assumptions are open assumptions in the following sense: If they are satisfied for the cocycle $A(E,\cdot)$ with $\frac{\varepsilon}{ 47   L}$, then for any $\frac{1}{12} > \varepsilon' > \frac{\varepsilon}{47 L}$, any $B$ sufficiently close to $A(E,\cdot)$ will satisfy the assumptions as well with $\varepsilon'$ instead of $\frac{\varepsilon}{47 L}$ and the same $L$ and $\ell$. So, the conclusion of the lemma will hold for such $B$. With $\varepsilon' = \frac{\varepsilon }{46 L}$, the conclusion of the lemma gives
$$
L \Big( 1 - \frac{44}{46 L}\varepsilon \Big) \leq \frac{1}{n} \log\|B_n (\omega)\| \leq L \Big( 1 + \frac{\varepsilon}{46 L} \Big)
$$
for all $n \geq \ell$ and $\omega \in \Omega$ for any such $B$. This in turn implies
$$
\frac{1}{n}| \log \|B_n(\omega)\| - \log \|B_n (\varrho)\| | < \varepsilon
$$
for all $\omega \in \Omega$ and $n \geq \ell = N$ for any such $B$. By continuity of $A$ (in the first variable), there exists $\delta > 0$ such that each $A(E',\cdot)$ with $E' \in (E - \delta, E + \delta) \cap I$ is such a $B$. This gives the desired statement.

\smallskip

\textit{Case 2:  $A(E,\cdot)$ is uniform with $L(A(E,\cdot))=0$.} In this case, there exists $N \in \N$ with
$$
\frac{1}{k} \log \|A_k (E,\omega)\| < \varepsilon / 3
$$
for all $\omega \in \Omega$ and $k \geq N$. By continuity of $A$, there exists a $\delta > 0$ with
$$
\frac{1}{k} \log \|A_k (E',\omega)\| < \varepsilon / 2
$$
for all $E' \in (E - \delta, E + \delta) \cap I$,  $\omega \in \Omega$ and $k =N,\ldots, 2N$. By (a) of Lemma~\ref{l:aux}, we find
$$
\frac{1}{n} \log \|A_n(E',\omega)\| < \varepsilon / 2
$$
for all $\omega \in \Omega$, $E' \in (E - \delta, E + \delta) \cap I$, and $n \geq N$, and this easily gives the desired statement in this case.
\end{proof}

Whenever $(\Omega,T)$ is a dynamical system and $I$ is a compact interval, we define for $n \in \N$,
$$
\Var_n : C(I \times \Omega, \SL(2,\R)) \longrightarrow [0,\infty)
$$
by
$$
\Var_n (A) := \sup_{E \in I} \sup_{\varrho, \omega \in \Omega} \{ \left| \log \|A_n(E,\omega)\| - \log \|A_n(E,\varrho)\| \right| \}.
$$

By the preceding proposition, any $A \in U(I,\Omega)$ satisfies
$$
\lim_{n \to \infty} \frac{1}{n} \Var_n (A) = 0.
$$
In fact, also an even stronger converse holds. This is the content of the next proposition.

\begin{prop}\label{p:inclusion-two}
Let $(\Omega,T)$ be a dynamical system. Let $I \subset \R$ be a compact interval. Then, any $A \in C(I \times \Omega, \SL(2,\R))$ with
$$
\liminf_{n \to \infty} \frac{1}{n} \Var_n (A) = 0
$$
belongs to $U(I,\Omega)$.
\end{prop}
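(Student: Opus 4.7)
Fix $E \in I$; the task is to show that $A(E,\cdot)$ is uniform. I would first introduce $b_n(E) := \sup_{\omega\in\Omega} \log\|A_n(E,\omega)\|$ and note that, by submultiplicativity of the norm, $n \mapsto b_n(E)$ is subadditive, so Fekete's lemma yields
$$
L^*(E) := \lim_{n\to\infty} \tfrac{1}{n}b_n(E) = \inf_{n\in\N} \tfrac{1}{n} b_n(E).
$$
This $L^*(E)$ is the candidate Lyapunov exponent for $A(E,\cdot)$.

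The argument naturally splits into two cases. When $L^*(E) = 0$, the conclusion is essentially free: since $\|B\| \geq 1$ for every $B \in \SL(2,\R)$, one has $0 \leq \tfrac{1}{n}\log\|A_n(E,\omega)\| \leq \tfrac{1}{n}b_n(E) \to 0$ uniformly in $\omega$, so $A(E,\cdot)$ is uniform with $L(A(E,\cdot)) = 0$.

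The substance of the proof lies in the case $L^*(E) > 0$, which I would handle in two steps. \emph{Step 1 (uniformity along the subsequence).} The $\liminf$ hypothesis delivers a subsequence $(n_k)$ with $\Var_{n_k}(A)/n_k \to 0$. Since $b_{n_k}(E) - \log\|A_{n_k}(E,\omega)\| \leq \Var_{n_k}(A) = o(n_k)$ for every $\omega$, and $\tfrac{1}{n_k}b_{n_k}(E) \to L^*(E)$, one concludes that $\tfrac{1}{n_k}\log\|A_{n_k}(E,\omega)\| \to L^*(E)$ \emph{uniformly} in $\omega$. \emph{Step 2 (upgrade to the full sequence).} Given $\varepsilon > 0$ small compared to $L^*(E)$, choose $k$ so large that Step~1 gives $|\tfrac{1}{n_k}\log\|A_{n_k}(E,\omega)\| - L^*(E)| < \varepsilon$ for all $\omega$ and $\varepsilon$ satisfies the smallness assumption of Lemma~\ref{l:consequence-avalanche}. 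Applying that lemma with $L = L^*(E)$ and $\ell = n_k$ yields a two-sided bound $|\tfrac{1}{n}\log\|A_n(E,\omega)\| - L^*(E)| \leq C\varepsilon$ valid for every $n \geq n_k$ and every $\omega$, with $C$ depending only on $L^*(E)$. Letting $\varepsilon\to 0$ (and enlarging $k$ accordingly) produces uniform convergence along the full sequence, so $A(E,\cdot)$ is uniform with $L(A(E,\cdot)) = L^*(E)$.

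The main technical obstacle sits in Step~2: while the upper bound $\limsup_n \tfrac{1}{n}\log\|A_n(E,\omega)\| \leq L^*(E)$ is automatic from subadditivity of $b_n(E)$, the matching uniform lower bound requires controlling what happens when $A_n(E,\omega)$ is decomposed into $\lfloor n/n_k\rfloor$ blocks of length $n_k$ plus a bounded remainder. Without some version of the Avalanche Principle one cannot rule out unfavorable angle alignments between consecutive blocks that would destroy the accumulated expansion; invoking Lemma~\ref{l:consequence-avalanche} (proved in the appendix) is precisely the device that handles this, which is why the proof of the case $L^*(E) > 0$ is packaged in exactly the same way as Case~1 of Proposition~\ref{p:inclusion-one}.
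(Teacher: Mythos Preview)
Your approach is essentially the paper's: split according to whether the candidate exponent vanishes, handle the zero case by direct squeezing, and in the positive case pass from subsequential uniformity to full uniformity via Lemma~\ref{l:consequence-avalanche}. Your use of $L^*(E)=\lim_n \tfrac{1}{n} b_n(E)$ through Fekete's lemma is a clean way to identify the limit and delivers assumption~(a1) of that lemma for free; this is slightly tidier than the paper, which works with $L=\liminf_k \tfrac{1}{n_k}\log\|A_{n_k}(\omega_0)\|$ at a single $\omega_0$ and extracts (a1) from Lemma~\ref{l:aux-zwei}.

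There is, however, a genuine slip in Step~2. Assumption~(a2) of Lemma~\ref{l:consequence-avalanche} is a uniform \emph{lower} bound on $\tfrac{1}{2\ell}\log\|A_{2\ell}(\omega)\|$, not on $\tfrac{1}{\ell}\log\|A_\ell(\omega)\|$. Your Step~1 gives the lower bound only along the subsequence $(n_k)$, so taking $\ell=n_k$ leaves (a2) unverified at $2n_k$, which need not lie in that subsequence. The paper fixes this by setting $\ell=n_k/2$: first invoke part~(b) of Lemma~\ref{l:aux} to replace $n_k$ by $n_k+1$ if necessary so that $n_k$ is even, and then (a2) becomes precisely the lower bound at $2\ell=n_k$ supplied by Step~1. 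Assumption~(a1) for all $n\ge n_k/2$ still follows from your Fekete argument once $k$ is large, and (a3)--(a4) are automatic for large $k$. With this one adjustment your plan goes through.
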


\begin{proof}
Choose $E \in I$ arbitrary and write $A$ instead of $A(E,\cdot)$. By the assumption on $A$, we can find $n_k \in \N$ with
\begin{equation}
\tag{*} \delta_k := \frac{1}{n_k} \Var_{n_k} (A) \to 0, \; k \to \infty.
\end{equation}

We consider two cases:

\smallskip

\textit{Case 1: There exists $\omega_0\in\Omega$ with $\liminf_{k \to \infty} \frac{1}{n_k} \log \|A_{n_k} (\omega_0)\| = 0$.} Without loss of generality we can assume $\lim_{k \to \infty} \frac{1}{n_k} \log\|A_{n_k} (\omega)\| = 0$. By $(*)$ this gives $\lim_{k \to \infty} \frac{1}{n_k} \log \|A_{n_k} (\omega)\| = 0$ uniformly in $\omega \in \Omega$. By  Lemma~\ref{l:aux-zwei}, we infer $\lim_{n \to \infty} \frac{1}{n} \log \|A_n (\omega)\| = 0$ uniformly.

\smallskip

\textit{Case 2:  There exists $\omega_0 \in \Omega$ with $L := \liminf_{k \to \infty} \frac{1}{n_k} \log \|A_{n_k} (\omega_0)\| > 0 $.} Assume without loss of generality that
$$
L_k := \frac{1}{n_k} \log\|A_{n_k} (\omega_0)\| \to L, \; k \to \infty.
$$

By $(*)$ and the definition of $\Var_n$, we then have for all $\omega \in \Omega$,
\begin{equation}\label{twostar}
\tag{**} L_k - \delta_k \leq \frac{1}{n_k} \log\|A_{n_k}\| (\omega)
< L_k + \delta_k
\end{equation}
with $\delta_k \to 0$, $k \to \infty$, and $L_k \to L$, $k \to \infty$. By (b) of Lemma~\ref{l:aux}, we can assume without loss of generality that each $n_k$ is even (as we could otherwise replace $n_k$ by $n_k +1 $).

By $n_k \to \infty$,   Lemma~\ref{l:aux-zwei} and the upper bound in \eqref{twostar}, there exist $\delta_k' > 0$ with $\delta_k' \to 0$, $k \to \infty$ and
$$
\frac{1}{n} \log \|A_{n} (\omega)\| \leq L + \delta_k'
$$
for all $n \geq n_k/2$. Also, by $n_k \to \infty$, we clearly have
$\frac{3}{4} L \frac{n_k}{2} \geq \lambda_0$ (with $\lambda_0$ from
Lemma~\ref{l:consequence-avalanche}) for all sufficiently large $k$.

From these considerations we see that for arbitrary $\varepsilon <
\frac{1}{12}$, the assumptions of
Lemma~\ref{l:consequence-avalanche} are satisfied with $\ell =
\frac{n_k}{2}$, provided that $k$ is sufficiently large. The
statement of the lemma then gives the desired uniformity of $A$.
\end{proof}

\begin{proof}[Proof of Theorem~\ref{t:main-abstract}]
It suffices to consider a compact interval $I$ (compare the proof of Theorem~\ref{t:general-general}). Set
$$
U_{n,\varepsilon} := \Big\{ A \in C(I \times \Omega,\SL(2,\R)) : \frac{1}{n} \Var_n (A) < \varepsilon \Big\}.
$$
By continuity of $A$, the set $U_{n,\varepsilon}$ is open. Hence,
$$
\widetilde{U}_{N,\varepsilon} := \bigcup_{n \geq N} U_{n,\varepsilon}
$$
is open as well. Thus,
$$
W := \bigcap_{N, k \in \N} \widetilde{U}_{N, \frac{1}{k}}
$$
is a $G_\delta$-set.

\smallskip

It remains to show $W = U(I,\Omega)$. To show this, we prove two inclusions:

\smallskip

$U(I,\Omega) \subset W$: This is a direct consequence of Proposition~\ref{p:inclusion-one}.

\smallskip

$W \subset U(I,\Omega)$: It is not hard to see that
$$
W = \Big\{  A \in C(I \times \Omega,\SL(2,\R)) : \liminf_{n \to \infty} \frac{1}{n} \Var_n (A) = 0 \Big\}.
$$
Now, the inclusion follows from Proposition~\ref{p:inclusion-two}.
\end{proof}

\section{Upper Semicontinuity of the Measure of the Spectrum}\label{s.6}

In this section we consider a dynamical system $(\Omega,T)$ and the
associated Schr\"odinger operators and note that the map
\begin{equation}\label{e.msigmadef}
M_\Sigma: C(\Omega,\R) \to [0,\infty), \quad f \mapsto
\Leb(\Sigma_f)
\end{equation}
is upper semi-continuous.  The proof uses variations of ideas
developed in \cite{DFL17} in the context of continuum limit-periodic
Schr\"odinger operators and was suggested to us by Jake Fillman.
This will then imply that $\{ f \in C(\Omega,\R) : \Leb(\Sigma_f) =
0 \}$ is a $G_\delta$-set.

\begin{prop}\label{p.semicont}
The map $M_\Sigma$ defined in \eqref{e.msigmadef} is upper
semi-continuous, that is, for every $\delta > 0$, we have that
$M_\Sigma(\delta) := \{ f \in C(\Omega,\R) : \Leb(\Sigma_f) < \delta
\}$ is open.
\end{prop}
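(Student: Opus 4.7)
The plan is to combine the standard operator-norm continuity of spectra of bounded self-adjoint operators with outer regularity of Lebesgue measure on closed bounded subsets of $\R$.

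First, for any $f, g \in C(\Omega,\R)$ and any $\omega \in \Omega$, the difference $H_\omega^f - H_\omega^g$ is multiplication by $V_\omega^f - V_\omega^g$, and so has operator norm on $\ell^2(\Z)$ at most $\|f - g\|_\infty$. Since both operators are bounded and self-adjoint, the classical spectral perturbation bound gives, for every $\omega \in \Omega$,
$$
\sigma(H_\omega^g) \subseteq \sigma(H_\omega^f) + \bigl[-\|f-g\|_\infty,\, \|f-g\|_\infty\bigr].
$$
Passing to the ($\mu$-almost sure, and in the minimal setting $\omega$-independent) spectra yields
$$
\Sigma_g \subseteq \Sigma_f + \bigl[-\|f-g\|_\infty,\, \|f-g\|_\infty\bigr].
$$

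Second, I would exploit that $\Sigma_f$ is closed and bounded (as $|E| \le 2 + \|f\|_\infty$ on $\Sigma_f$), so the open neighborhoods $K_\varepsilon := \Sigma_f + (-\varepsilon, \varepsilon)$ form a decreasing family in $\varepsilon$ with $\bigcap_{\varepsilon > 0} K_\varepsilon = \Sigma_f$ and $\Leb(K_1) < \infty$. Continuity of Lebesgue measure from above then gives $\Leb(K_\varepsilon) \searrow \Leb(\Sigma_f)$ as $\varepsilon \searrow 0$. Given $\delta > 0$ and $f$ with $\Leb(\Sigma_f) < \delta$, one picks $\varepsilon > 0$ small enough that $\Leb(\Sigma_f + [-\varepsilon, \varepsilon]) < \delta$; any $g$ with $\|f - g\|_\infty < \varepsilon$ then satisfies $\Sigma_g \subseteq \Sigma_f + [-\varepsilon, \varepsilon]$ and hence $\Leb(\Sigma_g) < \delta$, so $g \in M_\Sigma(\delta)$. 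This gives openness of $M_\Sigma(\delta)$, that is, upper semi-continuity of $M_\Sigma$.

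The argument is essentially soft, and the step requiring the most care is the second one: the crude bound $\Leb(K + [-\varepsilon, \varepsilon]) \le \Leb(K) + 2\varepsilon$ fails for generic closed sets such as positive-measure Cantor sets, so the continuity-of-measure-from-above step genuinely is needed to convert the Hausdorff-type inclusion of spectra into an inequality of Lebesgue measures. The perturbation bound on individual $\sigma(H_\omega^f)$ also has to survive the passage to $\Sigma_f$, but this is immediate in the minimal setting of Theorem~\ref{t.main} where $\Sigma_f = \sigma(H_\omega^f)$ for every $\omega \in \Omega$.
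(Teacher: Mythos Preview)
Your argument is correct and follows essentially the same route as the paper: both combine the spectral inclusion $\Sigma_g \subseteq B_\varepsilon(\Sigma_f)$ coming from the $\ell^\infty$ perturbation bound with outer regularity of Lebesgue measure on compact sets---the paper implements the latter via an explicit finite cover by intervals, you via continuity from above, which is the viewpoint the paper itself records in the remark following the proof. Your closing caveat is unnecessary: the passage from $\sigma(H_\omega^\bullet)$ to $\Sigma_\bullet$ works for any ergodic $\mu$, since one may choose $\omega$ in the $\mu$-full-measure set where simultaneously $\sigma(H_\omega^f)=\Sigma_f$ and $\sigma(H_\omega^g)=\Sigma_g$.
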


\begin{proof}
Let $\delta > 0$ be given, and let us consider $f \in
M_\Sigma(\delta)$. We have to show that there exists $\varepsilon >
0$ such that every $g \in C(\Omega,\R)$ with $\|f - g\|_\infty <
\varepsilon$ belongs to $M_\Sigma(\delta)$ as well.

By assumption, we have $\varepsilon' := \delta - \Leb(\Sigma_f) >
0$. By basic properties of the Lebesgue measure, we can choose
finitely many open intervals $I_1, \ldots, I_m$ with
$$
\Sigma_f \subset \bigcup_{j = 1}^m I_j \quad \text{and} \quad
\sum_{j = 1}^m |I_j| < \Leb(\Sigma_f) + \frac{\varepsilon'}{2}.
$$

Let us set $\varepsilon := \frac{\varepsilon'}{4m} > 0$. By
well-known properties of the spectrum of a Schr\"odinger operator
with respect to $\ell^\infty$ perturbations of the potential, if
$\|f - g\|_\infty < \varepsilon$, then $\Sigma_g \subset
B_\varepsilon(\Sigma_f)$ (where the latter notation denotes the
$\varepsilon$ neighborhood).

Putting these two ingredients together, we obtain
$$
\Sigma_g \subset B_\varepsilon \left( \bigcup_{j = 1}^m I_j \right),
$$
and hence
$$
\Leb(\Sigma_g) \le \Leb \left( B_\varepsilon \left( \bigcup_{j =
1}^m I_j \right) \right) \le 2m \varepsilon + \sum_{j = 1}^m |I_j| <
2m \varepsilon + \Leb(\Sigma_f) + \frac{\varepsilon'}{2} = \delta,
$$
as desired. This completes the proof.
\end{proof}

\begin{remark} The statement of the proposition  can also be
understood as follows: Let $\mathcal{K}$ be  set of all compact
subsets of $\R$ equipped with the the Hausdorff metric $d_H$ and let
$S(\ell^2 (\Z))$ be the set of bounded self-adjoint operators
equipped with the operator norm $\|\cdot\|$. Then, the map
$S(\ell^2(\Z))\ni A\mapsto \sigma(A)\in \mathcal{K}$, mapping a
bounded self-adjoint operator to its spectrum is continuous and,
actually, satisfies  $d_H(\sigma(A),\sigma(B))\leq \|A-B\|$, by
well-known perturbation theory of self-adjoint operators. Moreover,
the map $\mathcal{K}\ni K\mapsto \Leb(K)\in [0,\infty)$ is upper
semi-continuous, as is certainly well-known (and can also be seen
from the proof above). Altogether, we find that the map $S(\ell^2
(\Z))\longrightarrow [0,\infty), A\mapsto \Leb (\sigma(A)),$ is
upper semi-continuous. The statement of the proposition then follows
by composition as the map $C(\Omega,\R)\longrightarrow
S(\ell^2(\Z))$, $f\mapsto H_\omega^f$, is continuous with
$\|H_\omega^f - H_\omega^g\|\leq \|f-g\|_\infty$ for each
$\omega\in\Omega$.
\end{remark}

\begin{coro}\label{c.gdeltaset} Let $(\Omega,T)$ be a dynamical
system. Then, the set $\{ f \in C(\Omega,\R) : \Leb(\Sigma_f) = 0
\}$ is a $G_\delta$-set.
\end{coro}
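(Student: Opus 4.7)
The plan is to derive the corollary directly from Proposition~\ref{p.semicont} by a routine intersection argument. Since $\Leb(\Sigma_f) \geq 0$ always holds, the condition $\Leb(\Sigma_f) = 0$ is equivalent to $\Leb(\Sigma_f) < 1/n$ for every positive integer $n$. Thus I will first write
$$
\{ f \in C(\Omega,\R) : \Leb(\Sigma_f) = 0 \} = \bigcap_{n \in \N} \{ f \in C(\Omega,\R) : \Leb(\Sigma_f) < 1/n \} = \bigcap_{n \in \N} M_\Sigma(1/n),
$$
where $M_\Sigma(\delta)$ is the set defined in the statement of Proposition~\ref{p.semicont}.

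Next, I will invoke Proposition~\ref{p.semicont}, which guarantees that each $M_\Sigma(1/n)$ is an open subset of $C(\Omega,\R)$ (equipped with the topology of uniform convergence). The displayed identity therefore exhibits $\{ f \in C(\Omega,\R) : \Leb(\Sigma_f) = 0 \}$ as a countable intersection of open sets, which is the definition of a $G_\delta$-set. This completes the proof.

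There is no real obstacle here: the only substantive input is the upper semicontinuity of $f \mapsto \Leb(\Sigma_f)$, which has already been established in Proposition~\ref{p.semicont}, and the passage from upper semicontinuity to a $G_\delta$-description of the zero set is a standard maneuver that works for any nonnegative upper semicontinuous function on a topological space. For this reason the argument consists of little more than rewriting the zero set as the countable intersection of sublevel sets at the levels $1/n$.
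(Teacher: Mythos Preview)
Your proof is correct and follows exactly the same approach as the paper: both write the zero set as $\bigcap_{n \in \N} M_\Sigma(1/n)$ and invoke Proposition~\ref{p.semicont} to conclude that each term in the intersection is open.
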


\begin{proof}
Simply write
$$
\{ f \in C(\Omega,\R) : \Leb(\Sigma_f) = 0 \} = \bigcap_{n \in \N}
M_\Sigma \left( \tfrac1n \right)
$$
and use the fact that each $M_\Sigma(1/n)$ is open by
Proposition~\ref{p.semicont}.
\end{proof}

\section{Denseness of Locally Constant Cocycles}\label{s.7}
In this section we consider subshifts. Clearly, the set of locally
constant cocycles is dense in the set of continuous cocycles over a
subshift. Here, we show that a similar result holds for
one-parameter families of cocycles.

\begin{lemma}
Let $(\Omega,T)$ be a subshift and let $I$ be an interval
in $\R$. Then, the set
$$
\{A\in C(I \times \Omega,\SL(2,\R)) : A(E,\cdot) \mbox{ is locally
constant for each $E\in I$}\}
$$
is dense in $C(I\times \Omega, \SL(2,\R))$.
\end{lemma}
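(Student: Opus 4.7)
The plan is to approximate $A$ uniformly on compact subsets of $I\times\Omega$ by cocycles that are locally constant in the $\omega$-variable for each fixed $E$. Since the topology on $C(I\times\Omega,\SL(2,\R))$ is that of local uniform convergence, it suffices to show that for every compact subinterval $J\subset I$ and every $\varepsilon>0$ there is such an $A'$ with $\norm{A(E,\omega)-A'(E,\omega)}<\varepsilon$ for all $(E,\omega)\in J\times\Omega$.

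Because $\Omega$ is compact and $A$ is continuous on the compact product $J\times\Omega$, $A$ is uniformly continuous there. First I would pick $\delta>0$ so that $d(\omega,\omega')<\delta$ forces $\norm{A(E,\omega)-A(E,\omega')}<\varepsilon$ for every $E\in J$. Then, using that $\Omega$ is a subshift with the standard product metric, I would choose $n\in\N$ large enough that any two points of $\Omega$ agreeing on the block of positions $-n,\dots,n$ lie within distance $\delta$. For each admissible word $w$ of length $2n+1$, I would select one representative $\omega_w$ in the clopen cylinder $[w]\subset\Omega$ and define
$$
A'(E,\omega) := A(E,\omega_w) \qquad\text{whenever } \omega\in[w].
$$

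Then $A'(E,\cdot)$ depends only on the central window of $\omega$ of length $2n+1$ and is therefore locally constant on $\Omega$. Joint continuity of $A'$ on $I\times\Omega$ follows from the facts that the relevant cylinders form a finite clopen partition of $\Omega$ and that $E\mapsto A(E,\omega_w)$ is continuous for each fixed $w$. Finally, for $\omega\in[w]$ and $E\in J$, the choice of $\delta$ gives
$$
\norm{A(E,\omega)-A'(E,\omega)} = \norm{A(E,\omega)-A(E,\omega_w)} < \varepsilon,
$$
which is the required uniform approximation on $J\times\Omega$.

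There is no real obstacle here: the argument is a routine combination of uniform continuity on a compact set with the fact that the clopen cylinders furnish a neighborhood base at every point of a subshift. The only point that warrants a brief check is that $A'$ stays globally continuous after the discretization in $\omega$, but this is automatic because only finitely many words of length $2n+1$ occur in $\Omega$ and each of the corresponding cylinders is both open and closed.
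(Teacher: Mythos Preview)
Your argument is correct, and in fact it is cleaner than the paper's own proof. The key observation you exploit---and the paper does not---is that defining $A'(E,\omega):=A(E,\omega_w)$ automatically keeps the values in $\SL(2,\R)$ and automatically preserves continuity in $E$, so nothing needs to be repaired afterwards. Your construction also lives on all of $I\times\Omega$ from the outset, so no separate extension step is required.

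The paper proceeds quite differently: it covers $J$ by finitely many open sets $I_1,\dots,I_N$ on which $A$ varies little in $E$, picks for each $k$ a locally constant $B_k\in C(\Omega,\SL(2,\R))$ approximating $A(E,\cdot)$ for $E\in I_k$, and glues these via a partition of unity $\{\varphi_k\}$ subordinate to the cover, setting $\widetilde A(E,\omega)=\sum_k\varphi_k(E)B_k(\omega)$. This convex combination need not have determinant one, so the paper then renormalizes by $1/\sqrt{\det\widetilde A}$ and finally extends the result constantly in $E$ off of $J$. The advantage of this route is modularity---it treats the one-variable density of locally constant cocycles as a black box---but it is visibly longer and requires the determinant-correction step that your direct discretization in $\omega$ sidesteps entirely.
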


\begin{proof}
We consider $\SL(2,\R)$ as a subspace of the space $\MM(2,\R)$ of real $2\times 2$-matrices with metric induced by the standard norm on these matrices.

Let $A : I\times \Omega \longrightarrow \SL(2,\R)$ continuous,
$\varepsilon >0$, and $J\subset I$ compact be given.

We will construct a continuous $A' : J\times \Omega\longrightarrow
\SL(2,\R)$ such that $A'(E,\cdot)$ is locally constant for each
$E\in J$ and
$$
\|A(E,\omega)- A'(E,\omega)\|\leq  \varepsilon
$$
holds for all $E\in J$ and $\omega \in \Omega$.  This $A'$ can then be extended to a continuous function $A^* : I\times \Omega\longrightarrow \SL(2,\R)$, which is locally constant in the second argument,  by extending it constantly outside of the compact $J$. Specifically, with $J=[E_{\min},E_{\max}]$  we define $A^*(E,\omega):= A'(E_{\max},\omega)$ for $E\geq E_{\max}$ and $A^*(E,\omega) = A'(E_{\min},\omega)$ for $E\leq E_{\min}$.

As $A$ is continuous,  the set  $A(J\times \Omega)\subset \MM(2,\R)$
is compact. Hence, as the determinant is a continuous function on
$\MM(2,\R)$ and
$$
\det C = 1 \quad \text{and} \quad  \frac{1}{\sqrt{\det C}} C - C = 0
$$
for any $C \in A(J\times \Omega)$ (since $A(J\times \Omega) \subseteq \SL(2,\R)$), there exists a $\delta>0$ such that
$$
\det C > 0 \quad \text{and} \quad \left\| \frac{1}{\sqrt{\det C}}  C - C \right\| \leq \frac{\varepsilon}{2}
$$
for any $C\in \MM(2,\R)$ with distance from $A(J\times \Omega)$
smaller than $\delta$. Without loss of generality we assume $\delta
\leq \frac{\varepsilon}{2}$.

By continuity of $A$ again, we can find finitely many open sets $I_1,\ldots, I_N$ in $I$ with $$J\subset \bigcup_k I_k$$ such that
$$
\|A(E,\omega) - A(E',\omega)\|< \frac{\delta}{2}
$$
for all $\omega\in \Omega$ whenever $E',E$ belong to the same $I_k$. As locally constant cocycles are dense in $C(\Omega,\SL(2,\R))$ we can then choose for each  $k=1,\ldots, N$ a locally constant $B_k \in C(\Omega,\SL(2,\R)$ with
$$
\|B_k(\omega)- A(E,\omega)\| < \delta
$$
for any $\omega\in \Omega$ and $E\in I_k$.

Let $\varphi_k$, $k=1,\ldots, N$,   be a partition of
unity subordinate to $I_1,\ldots, I_N$.  This means that each
$\varphi_k$ is a continuous non-negative function on $I$  with compact
support contained in $I_k$  and
$$\sum_{k} \varphi_k (E) =1$$
 for each $E\in J$.  Define
$$A_k :J\times \Omega\longrightarrow \MM(2,\R), (E,\omega)\mapsto
\varphi_k (E) B_k (\omega).$$ Then, each $A_k$ is a continuous
function and $A_k (E,\cdot)$ is locally constant for each $E\in J$. Hence,
$$
\widetilde{A}:=\sum_k A_k
$$
is a continuous function on $J\times \Omega$ and $\widetilde{A}(E,\cdot)$ is locally constant for each $E\in J$. A short computation invoking $A(E,\omega) = \sum_k \varphi_k(E) A(E,\omega)$ for all $E\in J$ and $\omega\in\Omega$ shows
$$
\|\widetilde{A}(E,\omega) - A(E,\omega)\| \leq \sum_k \varphi_k
(E) \|B_k(\omega) - A(E,\omega)\|<  \sum_k  \varphi_k(E)\delta  =
\delta
$$
for all $E\in J$. Hence  by our choice of $\delta$ we infer
$$
\det \widetilde{A}(E,\omega) > 0 \quad \text{and} \quad \left\| \frac{1}{\sqrt{\det \widetilde{A}(E,\omega)}} \widetilde{A}(E,\omega) - \widetilde{A}(E,\omega) \right\| \leq \frac{\varepsilon}{2}
$$
for all $E\in J$ and $\omega\in\Omega$.  Define $A'$ on $J\times
\Omega$ by
$$
A'(E,\omega) := \frac{1}{\sqrt{\det \widetilde{A}(E,\omega)}} \widetilde{A}(E,\omega).
$$
Then, $A'$ is continuous with values in $\SL(2,\R)$ and $A'(E,\cdot)$ is locally constant (as the determinant of the locally constant $\widetilde{A}(E,\cdot)$ is locally constant). By construction we find
\begin{eqnarray*}
\|A'(E,\omega) - A(E,\omega)\| & \leq & \|A'(E,\omega) - \widetilde{A}(E,\omega)\| + \|\widetilde{A}(E,\omega) - A(E,\omega)\|\\
&\leq &  \frac{\varepsilon}{2} + \delta\\
& \leq& \varepsilon
\end{eqnarray*}
and the proof is finished.
\end{proof}

\begin{remark}
The proof carries over directly to  any compact topological space
$I$.
\end{remark}

From the preceding lemma and our main results we immediately obtain the following corollary.

\begin{coro}\label{c:abstract-general}
Let $(\Omega,T)$ be a subshift over a finite alphabet.

{\rm (a)}  If all locally constant cocycles on $\Omega$  have uniform
behaviour, then  for any interval $I$ the set $U(I,\Omega)$ is a
dense $G_\delta$-set.

{\rm (b)}  If all locally constant cocycles on $\Omega$ are uniform, then
for any interval $I$ the set $U(I,\Omega)$ is a dense
$G_\delta$-set.
\end{coro}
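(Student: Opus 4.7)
The plan is to assemble three pieces that are already available: the $G_\delta$ property of the target set (Theorems~\ref{t:general-general} and~\ref{t:main-abstract}), the density in $C(I\times\Omega,\SL(2,\R))$ of cocycle families that are locally constant in the $\Omega$-variable (the preceding lemma), and the standing hypothesis on locally constant cocycles over $\Omega$. No new estimates are required, only a clean combination.

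First, the $G_\delta$ half of each conclusion is immediate. Under the hypothesis of (a) one applies Theorem~\ref{t:general-general} to see that $W(I,\Omega)$ is a $G_\delta$-set; under the hypothesis of (b) one applies Theorem~\ref{t:main-abstract} to see that $U(I,\Omega)$ is a $G_\delta$-set. The task therefore reduces entirely to exhibiting a dense subset of $C(I\times\Omega,\SL(2,\R))$ that sits inside the target $G_\delta$-set.

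Second, the dense subset is provided by the preceding lemma. Let
$$
\mathcal{L}(I,\Omega) := \{A\in C(I\times\Omega,\SL(2,\R)) : A(E,\cdot)\text{ is locally constant for each }E\in I\},
$$
which the lemma identifies as a dense subset of $C(I\times\Omega,\SL(2,\R))$. For any $A\in\mathcal{L}(I,\Omega)$ and any $E\in I$, the slice $A(E,\cdot)\in C(\Omega,\SL(2,\R))$ is by construction a locally constant cocycle on $\Omega$. Under the hypothesis of (a) every such slice has uniform behavior, so $\mathcal{L}(I,\Omega)\subseteq W(I,\Omega)$; under the hypothesis of (b) every such slice is uniform, so $\mathcal{L}(I,\Omega)\subseteq U(I,\Omega)$.

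Combining the two observations, the target set is a $G_\delta$ containing the dense set $\mathcal{L}(I,\Omega)$ and is therefore itself a dense $G_\delta$-set. The proof is a routine assembly of previously proved results, and there is no substantive obstacle to be overcome. The only point worth checking is the trivial one that each $\Omega$-slice of an element of $\mathcal{L}(I,\Omega)$ is indeed a locally constant cocycle on $\Omega$ in the usual sense (depending on only finitely many coordinates), so that the standing hypothesis applies slice-by-slice.
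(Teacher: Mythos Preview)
Your proof is correct and follows exactly the paper's own argument: the paper's proof of (a) cites the preceding density lemma together with Theorem~\ref{t:general-general}, and for (b) the same lemma together with Theorem~\ref{t:main-abstract}, which is precisely the assembly you carry out. Your reading of part (a) as concerning $W(I,\Omega)$ rather than $U(I,\Omega)$ matches the paper's own proof (which invokes Theorem~\ref{t:general-general}, the result about $W$), so the apparent discrepancy with the stated $U(I,\Omega)$ is a typo in the statement rather than a gap in your argument.
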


\begin{proof}
(a)  This follows from the preceding lemma and
Theorem~\ref{t:general-general}.

(b) This follows from the preceding lemma and Theorem~\ref{t:main-abstract}.
\end{proof}

\section{Generic Absence of Non-Uniform Hyperbolicity for Schr\"odinger Operators Over Boshernitzan Subshifts}\label{s.5}

In this section we show that for a generic continuous sampling function over an aperiodic subshift satisfying the Boshernitzan condition, the associated Schr\"odinger cocycles are uniform for all energies and the associated spectrum is a Cantor set of Lebesgue measure zero equal to the vanishing set of the Lyapunov exponent. That is, we prove Theorem~\ref{t.main} (and its corollary). We then
also point out a generalization.



Our proof of Theorem~\ref{t.main} relies on what we have shown in earlier sections  together with the  the following crucial feature of subshift satisfying (B).

\begin{lemma}[\cite{DL06a,DL06b}]\label{l:input-omega}
Let $(\Omega,T)$ be a subshift satisfying the Boshernitzan condition {\rm (B)}. Then any locally constant cocycle is uniform. In particular, if $(\Omega,T)$ is additionally assumed to be aperiodic, then  $\Sigma =\mathcal{Z}$ is a Cantor set of
Lebesgue measure zero for each Schr\"odinger operator associated to a locally constant $f\in C(\Omega,\R)$.
\end{lemma}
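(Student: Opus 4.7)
The plan is to prove the two assertions by invoking the main results of \cite{DL06a,DL06b}, together with the Kotani theorem \eqref{e.K} and the partition of the energy axis assembled before equation \eqref{e.Zero-uniform}.

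For the first assertion, the key observation is that a locally constant $A \in C(\Omega,\SL(2,\R))$ depends only on finitely many coordinates of $\omega$. Hence, for every $n$, the quantity $\log \|A_n(\omega)\|$ is a function of a word of length $n + O(1)$ appearing at position $0$ of $\omega$: there exist $r \in \N$ and a function $F_n : \Omega_{n+2r} \to \R$ with $F_n(w) = \log \|A_n(\omega)\|$ for every $\omega \in [w]$. Submultiplicativity of the operator norm translates into approximate subadditivity of $F_n$ under concatenation of words, with an error bounded by a constant independent of $n$.

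The crux of the argument is a uniform subadditive ergodic theorem for subshifts satisfying the Boshernitzan condition, which is the main input extracted from \cite{DL06a}: for any such $F_n$, the ratio $F_n(w)/n$ converges uniformly in $w \in \Omega_{n+2r}$ to its Lyapunov limit. The proof of that theorem exploits (B) as a quantitative form of unique ergodicity: along the subsequence realizing the $\limsup$ in the definition of (B), every word $w \in \Omega_n$ occurs in sufficiently long blocks of every $\omega \in \Omega$ with frequency bounded below by a positive multiple of $1/n$, uniformly in $w$. This frequency control caps the impact of words on which $F_n/n$ is atypically small, and yields the required uniform convergence of $\frac{1}{n} \log \|A_n(\omega)\|$ to $L(A)$. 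In other words, $A$ is uniform.

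For the second assertion, assume additionally that $\Omega$ is aperiodic and fix a locally constant $f \in C(\Omega,\R)$. Then the potentials $V_\omega$ take finitely many values and, by minimality of $\Omega$ combined with its aperiodicity, are aperiodic for every $\omega \in \Omega$ (the case of constant $f$ being excluded by the Cantor conclusion); Kotani's theorem \eqref{e.K} therefore gives $\Leb(\mathcal{Z}) = 0$. For every $E \in \R$, the Schr\"odinger cocycle $A^{E-f}$ is again locally constant, hence uniform by part (a), so $\mathcal{NUH} = \emptyset$. Since (B) yields unique ergodicity with $\supp \mu = \Omega$, the equivalence \eqref{e.Zero-uniform} applies and gives $\Sigma = \mathcal{Z}$, whence $\Leb(\Sigma) = 0$. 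Finally, as noted in Remark~\ref{r.afterCorollary1.7}, the spectrum of a dynamically defined Schr\"odinger operator over a minimal aperiodic system is closed and has no isolated points, so $\Sigma$ is a Cantor set of Lebesgue measure zero. The main obstacle is the uniform subadditive ergodic theorem invoked in the first part; this is the deep content of \cite{DL06a}, while the remainder of the argument is a direct assembly of the results already collected in the present paper.
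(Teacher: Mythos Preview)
The paper does not prove this lemma; it is stated as a citation of \cite{DL06a,DL06b} with no argument given. Your proposal therefore goes well beyond what the paper does, supplying an outline of the cited results together with the deductions needed for the second assertion. That outline is essentially correct and faithful to the sources: the uniform subadditive ergodic theorem under condition (B) is indeed the content of \cite{DL06a}, and the passage from uniformity of all $A^{E-f}$ to $\Sigma=\mathcal{Z}$ and then to zero-measure Cantor spectrum via \eqref{e.K}, \eqref{e.Zero-uniform}, and Remark~\ref{r.afterCorollary1.7} is exactly the assembly used elsewhere in the paper.

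One point deserves tightening. You assert that minimality plus aperiodicity of the subshift forces the potentials $V_\omega$ to be aperiodic, parenthetically excluding constant $f$ ``by the Cantor conclusion.'' That exclusion is circular: you cannot drop a case because the conclusion fails there. The honest statement is that if $V_\omega$ has period $p$ for some $\omega$, then $f\circ T^p=f$ on the (dense) orbit of $\omega$ and hence on all of $\Omega$; for locally constant $f$ over an aperiodic minimal subshift this forces $f$ to factor through a periodic quotient, which is the degenerate situation excluded in \cite{DL06b}. The lemma as stated in the present paper is slightly imprecise on this point as well, so this is not a defect of your argument relative to the paper, but you should flag the hypothesis rather than dismiss it.
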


\begin{proof}[Proof of Theorem~\ref{t.main}.]
(a)   Clearly, the map
$$
S : C(\Omega) \longrightarrow C(\R \times \Omega, \SL(2,\R)), \; f
\mapsto \left( (E,\omega) \mapsto A^{E - f}(\omega) \right)
$$
is continuous.  Hence, the inverse image under $S$ of any $G_\delta$-set in $C(\R \times \Omega, \SL(2,\R))$ is a $G_\delta$-set in $C(\Omega)$. Thus, the set $\mathcal{G}$ consisting of $f \in C(\Omega)$ with $A(E, \cdot) := A^{E -
f(\cdot)} \in U(\R, \Omega)$ is a $G_\delta$-set by
Theorem~\ref{t:main-abstract}.

Moreover, for subshifts satisfying (B), it is known that any locally constant $f \in C(\Omega)$ yields a one-parameter family $A(E, \cdot) := A^{E - f(\cdot)} \in U(\R,\Omega)$; see Lemma~\ref{l:input-omega}. As locally constant $f\in C(\Omega)$ are
dense in $C(\Omega)$, we infer that the set $\mathcal{G}$ is dense as well. Altogether, this shows that $\mathcal{G}$ is a dense $G_\delta$-set.

Finally, as mentioned already, any subshift satisfying (B) is
uniquely ergodic and minimal. Hence, by the discussion in the
introduction, and in particular, by \eqref{e.Zero-uniform}, the
Schr\"odinger operator associated to $f\in C(\Omega,\R)$ satisfies
$\Sigma = \mathcal{Z}$ if and only if $\mathcal{NUH} =\emptyset$
holds, and this is the case if and only if the associated
Schr\"odinger cycle is uniform for all $E\in\R$, i.e. if and only if
$f\in \mathcal{G}$. As $\mathcal{G}$ is a $G_\delta$-set, this
proves (a).

\smallskip

(b) By Corollary~\ref{c.gdeltaset}, the set  $\{ f \in C(\Omega,\R)
: \Leb(\Sigma_f) = 0 \}$ is a $G_\delta$-set. Moreover, by
aperiodicity and (B)  this set  is dense by
Lemma~\ref{l:input-omega}. This shows (b).
\end{proof}

As a by-product of the considerations in the preceding proof we now deal with our result concerning the question of Walters.

\begin{proof}[Proof of Corollary~\ref{c:walters}.]
Any  locally constant cocycle on a subshift satisfying (B) is uniform, see Lemma~\ref{l:input-omega}. Now, the corollary is immediate from (b) of Corollary~\ref{c:abstract-general} (applied with an interval $I$ consisting of one point).
\end{proof}

Invoking \cite{AD05} we can give  also a variant of
Theorem~\ref{t.main}. This variant deals with  a more general
setting. We formulate it mainly as a reference point for potential
future generalizations.

\begin{theorem}\label{t.main2}
Let $(\Omega,T)$ be an aperiodic dynamical system. Assume that the set
$$
\{ f \in C(\Omega,\R) : A^{E-f} \mbox{ has uniform behavior for all } E \in \R \}
$$
is dense in $C(\Omega,\R)$. Then, for any ergodic measure $\mu$ on
$\Omega$, the set of $f \in C(\Omega,\R)$ for which we have that
$\mathcal{NUH} = \emptyset$ {\rm (}and hence $\Sigma = \mathcal{Z}${\rm )} and
$\Sigma$ is a Cantor set of Lebesgue measure zero is residual {\rm (}i.e., it contains a dense $G_\delta$-set{\rm )}.
\end{theorem}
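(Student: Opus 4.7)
The plan is to imitate the proof of Theorem~\ref{t.main}, replacing the concrete density supplied by Lemma~\ref{l:input-omega} with the abstract density hypothesis of the theorem, and replacing Kotani's identity \eqref{e.K} (which exploited that the potentials take finitely many values) by the generic absence of absolutely continuous spectrum from \cite{AD05}. The $G_\delta$-machinery from Section~\ref{s.3} together with the Johnson-Lenz and Ishii-Pastur-Kotani theorems will do most of the work.

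First, I would set
\[
\mathcal{G}_1 := \{ f \in C(\Omega,\R) : A^{E-f} \text{ has uniform behavior for every } E \in \R \}
\]
and observe that the assignment $S \colon C(\Omega,\R) \to C(\R \times \Omega,\SL(2,\R))$, $f \mapsto ((E,\omega) \mapsto A^{E-f}(\omega))$, is continuous. Hence $\mathcal{G}_1 = S^{-1}(W(\R,\Omega))$ is a $G_\delta$-set by Theorem~\ref{t:general-general}, and by the hypothesis of the theorem it is dense. Now fix an ergodic measure $\mu$. For $f \in \mathcal{G}_1$, Remark~\ref{r.uniformvanishing} applied at each energy $E$ gives that either $E \in \mathcal{UH}$ or $L_\mu(A^{E-f}) = 0$, so $\mathcal{NUH} = \emptyset$, and the Johnson-Lenz inclusion \eqref{e.JL} then forces $\Sigma = \mathcal{Z}$.

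Next, by \cite{AD05} and aperiodicity of $(\Omega,T)$, the set $\mathcal{G}_2 := \{ f \in C(\Omega,\R) : \Sigma_\mathrm{ac} = \emptyset \}$ is a dense $G_\delta$-set. The Baire category theorem applied to the completely metrizable space $C(\Omega,\R)$ yields that $\mathcal{G}_1 \cap \mathcal{G}_2$ is again a dense $G_\delta$-set. For any $f \in \mathcal{G}_1 \cap \mathcal{G}_2$, the Ishii-Pastur-Kotani identity \eqref{e.IPK} yields $\overline{\mathcal{Z}}^\mathrm{ess} = \Sigma_\mathrm{ac} = \emptyset$, hence $\Leb(\mathcal{Z}) = 0$, and combined with $\Sigma = \mathcal{Z}$ from the previous step this gives $\Leb(\Sigma) = 0$. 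The spectrum $\Sigma$ is then nonempty, closed, has empty interior, and has no isolated points by standard facts for ergodic Schr\"odinger operators in the aperiodic setting (compare Remark~\ref{r.afterCorollary1.7}(a)); hence $\Sigma$ is a Cantor set.

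The step I expect to be most delicate is the absence of isolated points at this level of generality: in the Boshernitzan setting of Theorem~\ref{t.main} it is clean, because minimality gives $\omega$-independence of $\sigma(H_\omega)$ and standard ergodic-operator arguments rule out discrete spectrum, but for a general aperiodic dynamical system one must separately argue that an isolated point of the $\mu$-almost sure spectrum would be a $\mu$-a.s.\ eigenvalue, which is incompatible with aperiodicity and ergodicity. All remaining steps are direct applications of Theorem~\ref{t:general-general}, the Johnson-Lenz and Ishii-Pastur-Kotani theorems, and \cite{AD05}.
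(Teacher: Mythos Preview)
Your proof is correct and follows essentially the same route as the paper's: you pull back $W(\R,\Omega)$ along the continuous map $S$ to get a dense $G_\delta$-set $\mathcal{G}_1$ on which $\mathcal{NUH}=\emptyset$ (hence $\Sigma=\mathcal{Z}$ via Johnson--Lenz), intersect with the dense $G_\delta$-set coming from \cite{AD05}, and conclude the Cantor zero-measure statement. The only cosmetic difference is that the paper quotes \cite{AD05} directly as giving $\Leb(\mathcal{Z})=0$ generically, while you quote it as $\Sigma_{\mathrm{ac}}=\emptyset$ and then invoke Ishii--Pastur--Kotani; by \eqref{e.IPK} these two formulations describe the same set of $f$, so the arguments coincide.
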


\begin{proof}
Clearly, the map
$$
S : C(\Omega) \longrightarrow C(\R \times \Omega, \SL(2,\R)), \; f \mapsto \left( (E,\omega) \mapsto A^{E - f}(\omega) \right)
$$
is continuous.

Hence, the inverse image under $S$ of any $G_\delta$-set in $C(\R \times \Omega, \SL(2,\R))$ is a $G_\delta$-set in $C(\Omega)$. Thus, the set $\mathcal{G}$ consisting of $f \in C(\Omega)$ with $A(E, \cdot) := A^{E - f(\cdot)} \in W(\R, \Omega)$ is a $G_\delta$-set by Theorem~\ref{t:general-general}. Moreover, by assumption $\mathcal{G}$ is dense. Hence, $\mathcal{G}$ is a dense $G_\delta$-set and for each $f \in \mathcal{G}$, we have $\mathcal{NUH} = \emptyset$.  By \eqref{e.JL}, for all $f \in \mathcal{G}$ we then have $\Sigma = \mathcal{Z}$. Thus, the set of of $f$ such that $\mathcal{NUH} = \emptyset$ and $\Sigma =\mathcal{Z}$ holds is
residual.

Moreover, by \cite{AD05} and our aperiodicity assumption, the set of $f$ with $\Leb(\mathcal{Z}) = 0$ is a dense $G_\delta$-set and hence residual.

Since the intersection of two residual sets is residual and
$\Leb(\Sigma) = 0$ implies that $\Sigma$ is a Cantor set by general
principles (cf.~Remark~\ref{r.afterCorollary1.7}.(a)), we are done.
\end{proof}

\begin{remark}\label{r:method}
(a) Our proof of Theorem~\ref{t.main} works for  all uniquely ergodic minimal  subshifts for which locally constant cocycles are uniform as for these subshifts the conclusions of Lemma~\ref{l:input-omega} hold by \cite{DL06a}. Recent results show
the uniformity of locally constant cocycles for all simple Toeplitz subshifts \cite{GLNS, S19} (see \cite{LQ11} for related earlier work as well). All simple Toeplitz subshifts are minimal and uniquely ergodic.  Hence, the statement of Theorem~\ref{t.main} will hold for these subshifts as well. Note that the class of simple Toeplitz subshifts contains examples not satisfying the Boshernitzan
condition. A characterization of those simple Toeplitz subshifts satisfying the Boshernitzan condition is contained in \cite{LQ11}.
\\[1mm]
(b) Theorem~\ref{t.main2} does not require the dynamical system to be a subshift, nor does it require unique ergodicity or minimality. It can be applied to general ergodic dynamical systems. However, so far, the necessary denseness condition is only known for classes of uniquely ergodic minimal subshifts.
\\[1mm]
(c) Theorem~\ref{t.main2}  gives a slightly weaker conclusion than
Theorem~\ref{t.main} in that the involved sets are shown to be
residual rather then dense $G_\delta$-sets. The reason is that in
the  first part of the proof we obtain  the implication $f \in
\mathcal{G}\Longrightarrow \mathcal{NUH} =\emptyset$ but do not know
the converse (as we are dealing with general dynamical systems). If
additionally the condition of minimality and unique ergodicity is
imposed on the dynamical system, the converse holds and we can
conclude that the sets in question are $G_\delta$-sets (compare the
proof of Theorem~\ref{t.main} as well).
\\[1mm]
(d) The corresponding results hold for Jacobi operators. The alert reader may point out that in this more general setting, the standard transfer matrices are not given by $\SL (2,\R)$ cocycles, but rather by $\GL(2,\R)$ cocycles. However, it is not difficult (see, e.g., \cite{BP13,DKS10}) to identify an affiliated family of
$\SL (2,\R)$ cocycles whose study via the results above yields the desired conclusions.
\\[1mm]
(e) A similar remark applies in the setting of CMV matrices with dynamically defined Verblunsky coefficients. The necessary tools to adapt the present work to that setting are discussed in \cite{DL07}. The CMV analog of \cite{AD05}, as well as the adaptation of Corollary~\ref{c.main}, have been worked out in \cite{DFG}.
\end{remark}

\begin{remark} We  note that our proof of Theorem \ref{t:general-general} allows
for a (semi-) explicit construction of a potential with infinitely
many values (and arbitrarily close to any given potential) whose
cocycles are uniform for all energies whenever the underlying
dynamical system is a subshift $(\Omega,T)$ satisfying (B). The
point of the construction is that any finite sum of locally constant
functions $f : \Omega \to \R$ is locally constant again. Here are
the details:

Let $g_0$ be a locally constant function. Let $I$ be a compact interval containing an open neighborhood of the range of $g_0$. Let $g_n$, $n\in\N$, be an arbitrary
sequence of locally constant functions on $\Omega$ with $\|g_n\|
=1$ for each $n$. Let $\varepsilon_n\to 0$.

We now use the $W_\varepsilon$ from the proof of Theorem~\ref{t:general-general}. Consider $g_0$. Clearly $g_0$ belongs to $W_{\varepsilon_1}$ (as $g_0$ is locally constant).  As $W_{\varepsilon_1}$ is open, there exists a $\delta_1 > 0$ such that
any perturbation  of $g_0$ with sup norm not exceeding $\delta_1$ belongs to $W_{\varepsilon_1}$ as well. Without loss of generality, $\delta_1 < 1$.  Consider $g_0 + \frac{\delta_1}{2} g_1$. Clearly, this belongs to $W_{\varepsilon_2}$ (as it is locally constant). As $W_{\varepsilon_2}$ is open, there exists a $\delta_2>0$ such that any perturbation of $g_0 + \frac{\delta_1}{ 2} g_1$ with sup norm not exceeding $\delta_2$ belongs to $W_{\varepsilon_2}$. Without loss of generality $\delta_2 < \delta_1 / 2$. Inductively, we can then construct for each $N\in\N$ a  $\delta_N$ with $\delta_{N+1} < \delta_{N}/2$ such that any perturbation of $g_0 +
\frac{1}{2}\left(\sum_{j=1}^N \delta_j g_j\right)$ with sup norm not exceeding $\delta_{N+1}$ belongs to $W_{\varepsilon_{N+1}}$. Consider
$$
g:=g_0 + \lim_{N \to \infty} \Big( g_1 + \frac {1}{2}\sum_{j=1}^N \delta_j g_{j+1} \Big).
$$
By construction $g$ belongs to $W_{\varepsilon_j}$ for any $j\in\N$.
Now, the intersection of the $W_{\varepsilon_j}$ is $W(I,\Omega)$
(by definition of $W_\varepsilon$). This easily gives the desired
statement. As our choice of $g_0$ is arbitrary and $\sum \delta_j$
can be made arbitrarily small by making $\delta_1$ as small as
necessary, the function $g$ can be made arbitrarily close to any
given continuous function on $\Omega$.
\end{remark}

\begin{appendix}

\section{Notions of Uniform Hyperbolicity}\label{a.UH}

In this section we discuss various notions of uniform hyperbolicity in the context of continuous $\SL(2,\R)$ cocycles and the relationships between them. Related discussions can be found in \cite{DFLY16, V14, Y04, Z19}.

\medskip

Let $(\Omega,T)$ be a dynamical system. Denote the projective space over $\R^2$  consisting of lines through the origin by $\R \mathbb{P}^1$. This is a topological space in a natural way. Then any $B\in \SL (2,\R)$ can be considered as a map on $\R \mathbb{P}^1$ as it maps lines through the origin to lines through the origin. This map on $\R \mathbb{P}^1$ will be denoted by $B$ as well.

Let us consider the following three conditions for a continuous cocycle $A : \Omega\longrightarrow \SL(2,\R)$:

\begin{itemize}

\item[$\mathrm{(UH1)}$] There exists $L > 0$ with $\liminf_{n \to \infty} \frac{1}{n} \log \|A_n (\omega)\| \geq L$ uniformly in $\omega \in \Omega$.

\item[$\mathrm{(UH2)}$] There exists continuous maps $u, s : \Omega \longrightarrow \R \mathbb{P}^1$ as well as $\lambda > 1$ and $C > 0$ with

\begin{itemize}

\item[$(\alpha)$] $A (\omega) u(\omega) = u(T\omega) \mbox{ and } A(\omega) s(\omega) = s(T\omega)$ for all $\omega \in \Omega$;

\item[$(\beta)$] $\| A_n (\omega) U \|, \|A_{-n} S(\omega)\| \leq C \lambda^{-n}$ for all $n \in \N$, $\omega \in \Omega$ whenever $U \in u(\omega)$ and $S \in s(\omega)$ are normalized.

\end{itemize}

\item[$\mathrm{(UH3)}$] There exists $L > 0$ with $\lim_{n \to \infty} \frac{1}{n} \log \|A_n (\omega)\| = L$ uniformly in $\omega \in \Omega$.

\end{itemize}

\begin{prop}\label{p.uhimplications}
\begin{itemize}

\item[(a)] The conditions $\mathrm{(UH1)}$ and $\mathrm{(UH2)}$ are equivalent.

\item[(b)] $\mathrm{(UH3)}$ implies $\mathrm{(UH1)}$.

\item[(c)] If $(\Omega,T)$ is uniquely ergodic, then $\mathrm{(UH1)}$ is equivalent to  $\mathrm{(UH3)}$.

\end{itemize}
\end{prop}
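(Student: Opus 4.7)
Part (b) is essentially free: if $\frac{1}{n}\log\|A_n(\omega)\|$ converges to $L > 0$ uniformly in $\omega$, then in particular the liminf is at least $L$ uniformly. So the substantive content lies in parts (a) and (c).

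For the direction $\mathrm{(UH2)} \Rightarrow \mathrm{(UH1)}$ in part (a), the plan is to decompose an arbitrary unit vector along the continuous invariant splitting. By continuity of $u, s$ and compactness of $\Omega$, the angle between $u(\omega)$ and $s(\omega)$ is uniformly bounded away from zero. The contraction estimate $\|A_{-n}(\omega) S\| \leq C\lambda^{-n}$ in $(\beta)$, rewritten via $A_n(T^{-n}\omega) A_{-n}(\omega) = I$, implies that unit vectors in $u(\omega)$ are expanded by $A_n(\omega)$ with factor at least $C^{-1}\lambda^n$. Choosing $v$ in the unstable direction then yields $\|A_n(\omega)\| \geq C^{-1}\lambda^n$ uniformly, so (UH1) follows with any $L < \log\lambda$.

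The reverse implication $\mathrm{(UH1)} \Rightarrow \mathrm{(UH2)}$ is the main technical obstacle, and the plan is to invoke the classical cone-field / graph-transform construction. Under (UH1), for all sufficiently large $n$ the matrix $A_n(\omega) \in \SL(2,\R)$ has singular values $\|A_n(\omega)\|$ and $\|A_n(\omega)\|^{-1}$ with an exponentially growing gap, so its most-expanded direction is well-defined up to sign, varies continuously in $(n,\omega)$, and converges as $n \to \infty$ to a continuous section $u: \Omega \to \R\mathbb{P}^1$; invariance is forced by $A_{n+1}(\omega) = A(T^n\omega) A_n(\omega)$. The stable direction $s$ is built analogously using $A^{-1}$, and the bounds in $(\beta)$ are read off from the spectral data of $A_n$ together with the uniform separation of the two directions. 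References such as \cite{Y04, Z19} carry out this cone-field argument in full detail, and my plan is to invoke them rather than reproduce the construction.

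For part (c), assume (UH1) over a uniquely ergodic base and let $u, s$ be the continuous splitting produced by part (a). For each $\omega \in \Omega$, choose any nonzero $V_\omega \in u(\omega)$ and set
$$
\varphi(\omega) := \log \frac{\|A(\omega) V_\omega\|}{\|V_\omega\|}.
$$
This quantity is independent of the chosen representative and is continuous on $\Omega$ by continuity of $u$. Iterating $A(\omega)u(\omega) = u(T\omega)$ gives
$$
\log \frac{\|A_n(\omega) V_\omega\|}{\|V_\omega\|} = \sum_{k=0}^{n-1} \varphi(T^k\omega).
$$
By unique ergodicity and continuity of $\varphi$, the Birkhoff averages of the right-hand side converge uniformly in $\omega$ to $L := \int_\Omega \varphi\, d\mu$. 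Finally, the uniform angular separation between $u(\omega)$ and $s(\omega)$ implies that $\|A_n(\omega)\|$ is comparable, up to a uniform multiplicative constant, to $\|A_n(\omega) V_\omega\|/\|V_\omega\|$. This upgrades the Birkhoff convergence above to uniform convergence of $\frac{1}{n}\log\|A_n(\omega)\|$ to $L$; positivity $L > 0$ is inherited from (UH1), yielding (UH3).
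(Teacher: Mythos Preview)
Your overall strategy is correct and matches the paper's approach, which simply defers to references (\cite{DFLY16,V14,Y04,Z19} for (a), \cite{F97,L04} for (c)); your sketch for (c) via a Birkhoff average of the expansion rate along the continuous invariant splitting is exactly the standard argument those references carry out.

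One genuine slip to fix: you have the roles of $u$ and $s$ reversed relative to the paper's (admittedly non-standard) convention. In the paper's condition $(\beta)$, the estimate $\|A_n(\omega)U\|\le C\lambda^{-n}$ says that $u(\omega)$ is \emph{contracted} forward, so $u$ is the stable direction and $s$ is the unstable one. Thus in your argument for $\mathrm{(UH2)}\Rightarrow\mathrm{(UH1)}$ the direction that $A_n(\omega)$ expands is $s(\omega)$, not $u(\omega)$ (alternatively, and more directly: $\|A_n(\omega)U\|\le C\lambda^{-n}$ together with $\det A_n(\omega)=1$ already forces $\|A_n(\omega)\|\ge C^{-1}\lambda^n$). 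More importantly, in part (c) your function $\varphi$ must be defined with $V_\omega\in s(\omega)$; with $V_\omega\in u(\omega)$ as you wrote, the Birkhoff averages converge to a \emph{negative} number and the claimed comparability between $\|A_n(\omega)\|$ and $\|A_n(\omega)V_\omega\|/\|V_\omega\|$ fails. Once you swap $u\leftrightarrow s$ throughout, the argument goes through as written.
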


\begin{proof}
(a) This is well-known; see, for example, \cite[Theorem~1.2]{DFLY16}, \cite[Proposition~2.5]{V14}, \cite[Proposition~2]{Y04}, and \cite[Corollary~1]{Z19}.

\smallskip

(b) This is obvious.

\smallskip

(c) By (a) and (b) it suffices to show $\mathrm{(UH2)} / \mathrm{(UH1)} \Longrightarrow \mathrm{(UH3)}$. This follows by standard methods as discussed, for example, in \cite{F97, L04}. More specifically, \cite[Theorem~3]{L04} shows that $\mathrm{(UH3)}$ follows from $\mathrm{(UH1)}$ under an additional minimality assumption. This minimality assumption is only used in the proof to ensure $(\beta)$ of $\mathrm{(UH2)}$. Hence,  the proof carries over
to our case.
\end{proof}

\begin{remark}\label{r.example}
It is not hard to see that the implication $\mathrm{(UH1)} \Longrightarrow \mathrm{(UH3)}$ fails whenever the system is not uniquely ergodic. Indeed consider a non-uniquely ergodic dynamical system $(\Omega,T)$. Then, there exists a continuous $f : \Omega \longrightarrow \R$ such that
$$
\frac{1}{n} \sum_{k=0}^{n-1} f(T^k \omega)
$$
does not converge uniformly in $\omega \in \Omega$. Without loss of generality we can assume $f \geq 1$ (otherwise replace $f$ by $f + 1 + \|f\|_\infty$). Set $h := \exp (f)$ and let $A : \Omega \longrightarrow \SL(2,\R)$ be given by
$$
A(\omega) = \begin{pmatrix} h(\omega) & 0 \\ 0 & 1/h(\omega) \end{pmatrix}.
$$
As $f \geq 1$, the cocycle $A$ clearly satisfies $\mathrm{(UH1)}$ with $L = 1$.  However, we have
$$
\frac{1}{n} \log \|A_n(\omega)\| =\frac{1}{n} \sum_{k=0}^{n-1} f(T^k \omega),
$$
which does not converge uniformly, and therefore $\mathrm{(UH3)}$ fails.
\end{remark}

\begin{coro}
Let $(\Omega,T)$ be uniquely ergodic. Then, an $A \in C(\Omega,\SL(2,\R))$ is uniform if and only if it has uniform behavior.
\end{coro}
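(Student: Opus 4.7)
The plan is to reduce the statement directly to Proposition~\ref{p.uhimplications}(c); once that proposition is in hand, the corollary is an almost purely notational exercise relating the three properties ``uniform'', ``uniformly hyperbolic'', and ``uniform behavior''.

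First I would dispatch the forward implication, which in fact does not require unique ergodicity. If $A$ is uniform, then $\tfrac{1}{n} \log \|A_n(\omega)\| \to L(A)$ uniformly in $\omega$. If $L(A) > 0$, the uniform convergence yields the uniform lower bound $\liminf_{n \to \infty} \tfrac{1}{n} \log \|A_n(\omega)\| \geq L(A)/2 > 0$, which is exactly condition $\mathrm{(UH1)}$; hence $A$ is uniformly hyperbolic and thus has uniform behavior. If instead $L(A) = 0$, the uniform convergence $\tfrac{1}{n}\log\|A_n(\omega)\|\to 0$ is the second clause in the definition of uniform behavior.

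For the reverse implication, suppose $A$ has uniform behavior. If the vanishing clause holds, i.e.\ $\tfrac{1}{n} \log \|A_n(\omega)\| \to 0$ uniformly, then $A$ is tautologically uniform with $L(A) = 0$. In the remaining case $A$ is uniformly hyperbolic, i.e.\ satisfies $\mathrm{(UH1)}$; this is the one and only step where unique ergodicity enters the argument. Invoking Proposition~\ref{p.uhimplications}(c), we upgrade $\mathrm{(UH1)}$ to $\mathrm{(UH3)}$, namely $\tfrac{1}{n} \log \|A_n(\omega)\| \to L$ uniformly for some $L > 0$; this is precisely the statement that $A$ is uniform with $L(A) = L$.

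I do not expect any real obstacle specific to the corollary. All the substantive content is absorbed into Proposition~\ref{p.uhimplications}(c), whose proof was already carried out in the appendix via the standard argument that continuous invariant stable/unstable directions guaranteed by $\mathrm{(UH2)}$ force uniform convergence of $\tfrac{1}{n}\log\|A_n(\cdot)\|$ once a unique invariant measure is available. The corollary itself then reduces to case analysis and a single application of that implication.
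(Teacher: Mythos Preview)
Your argument is correct and is precisely the reasoning the paper has in mind: the corollary is stated without proof immediately after Proposition~\ref{p.uhimplications}, and the introduction already sketches exactly your case split (``a continuous cocycle is uniformly hyperbolic if and only if it is uniform with $L(A)>0$. From this we immediately conclude\ldots''). The only substantive step is the appeal to part~(c) of the proposition in the uniformly hyperbolic case, which you invoke correctly.
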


\section{A Consequence of the Avalanche Principle}

The Avalanche Principle deals with products of $\SL(2,\R)$ matrices $A_N \ldots A_1$. Roughly stated, it asserts that the norm of this product is large once the norm of each $A_j$ and of the products $A_{j+1} A_{j}$ of consecutive matrices are large. It was introduced by Goldstein-Schlag in \cite{GS01} and then extended by Bourgain-Jitomirskaya in \cite{BJ02}. Subsequently, various further variations and extensions have been found; see, for example, \cite{B05, DK14, DK16, S13}. For us, the following consequence, essentially taken from \cite{DL06a} and based on \cite{BJ02}, will be relevant.

\begin{lemma}\label{l:consequence-avalanche}
There exist constants $\kappa > 0$ and $\lambda_0 > 0$ such that the
following holds. Let $(\Omega,T)$ be a dynamical system and $A :
\Omega \longrightarrow \SL(2,\R)$. Let $0 < \varepsilon <
\frac{1}{12}$ be arbitrary. Assume that there exist $\ell \in \N$
and $L
>0$ with
\begin{itemize}

\item[(a1)] $\frac{1}{n} \log\|A_n (\omega)\|\leq L(1 + \varepsilon)$ for all $\omega \in \Omega$ and $n\geq l$,

\item[(a2)] $L (1 - \varepsilon) \leq \frac{1}{2l}\log \|A_{2l} (\omega)\|$ for all $\omega \in \Omega$,

\item[(a3)] $ \frac{3}{4} L \ell \geq \lambda_0$,

\item[(a4)] $\frac{1}{\ell} \frac{2 \kappa}{ \exp(\lambda_0)} < \varepsilon L$.

\end{itemize}

Then,
$$
L (1- 44\varepsilon) \leq \frac{1}{n} \log\|A_n (\omega)\| \leq L (1 + \varepsilon)
$$
for all $\omega \in \Omega$ and $n \geq \ell$.
\end{lemma}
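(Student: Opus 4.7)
The upper bound $n^{-1}\log\|A_n(\omega)\|\leq L(1+\varepsilon)$ for all $n\geq\ell$ is already supplied by hypothesis~(a1), so the task is to establish the matching lower bound. My plan is to invoke the Avalanche Principle of Bourgain--Jitomirskaya \cite{BJ02} in the $\SL(2,\R)$ form used in \cite{DL06a}: there are universal constants $\kappa>0$ and $\lambda_0>0$ such that for matrices $g_1,\dots,g_k\in\SL(2,\R)$ satisfying $\min_i\log\|g_i\|\geq\lambda_0$ together with a standard angular compatibility condition linking the sizes of $g_i$, $g_{i+1}$, and $g_{i+1}g_i$, one has the telescoping identity
$$
\Bigl|\log\|g_k\cdots g_1\|-\sum_{i=1}^{k-1}\log\|g_{i+1}g_i\|+\sum_{i=2}^{k-1}\log\|g_i\|\Bigr|\ \leq\ \kappa\, k\, e^{-\lambda_0}.
$$
The constants appearing in the lemma will be exactly these AP constants.

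Fix $\omega\in\Omega$ and $n\geq 2\ell$, write $n=k\ell+r$ with $k\geq 2$ and $0\leq r<\ell$, and set $g_i:=A_\ell(T^{(i-1)\ell}\omega)$ for $i=1,\dots,k$, so that $A_{k\ell}(\omega)=g_k\cdots g_1$ and $g_{i+1}g_i=A_{2\ell}(T^{(i-1)\ell}\omega)$. Then (a1) and (a2) directly give
$$
\log\|g_i\|\ \leq\ \ell L(1+\varepsilon),\qquad \log\|g_{i+1}g_i\|\ \geq\ 2\ell L(1-\varepsilon).
$$
Combining these through submultiplicativity $\|g_{i+1}g_i\|\leq\|g_{i+1}\|\|g_i\|$ yields the matching lower bound $\log\|g_i\|\geq\ell L(1-3\varepsilon)$. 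Since $\varepsilon<1/12$, together with (a3) this forces $\log\|g_i\|\geq \tfrac34 \ell L\geq\lambda_0$, while the angular defect $\log\|g_i\|+\log\|g_{i+1}\|-\log\|g_{i+1}g_i\|\leq 4\ell L\varepsilon$ is easily dominated by the required fraction of $\log\|g_i\|$. Hypothesis~(a4), rewritten as $\kappa e^{-\lambda_0}<\tfrac12\ell L\varepsilon$, then guarantees that the AP error $\kappa k e^{-\lambda_0}$ is bounded by $\tfrac12 k\ell L\varepsilon$.

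Applying the AP identity and substituting these estimates yields
$$
\log\|A_{k\ell}(\omega)\|\ \geq\ (k-1)\cdot 2\ell L(1-\varepsilon)\ -\ (k-2)\,\ell L(1+\varepsilon)\ -\ \tfrac12 k\ell L\varepsilon\ \geq\ k\ell L(1-C\varepsilon)
$$
for an explicit numerical constant $C$. To pass from $k\ell$ to general $n=k\ell+r$ with $0<r<\ell$, I would regroup the AP decomposition so that the final block has length $\ell+r\in[\ell,2\ell)$ and repeat the verification of the AP hypotheses using (a1), (a2), and appropriate submultiplicative comparisons under shifts by powers of $T$. Finally, the short range $\ell\leq n<2\ell$, where the AP cannot be applied directly with $k\geq 2$ blocks of size $\geq\ell$, is handled by the submultiplicative argument used to derive the bound $\log\|g_i\|\geq\ell L(1-3\varepsilon)$ above: writing $A_{2\ell}=A_{2\ell-n}(T^n\omega)\,A_n(\omega)$ and using (a1), (a2) produces a bound in which the roughness is absorbed by the slack between $1-3\varepsilon$ and $1-44\varepsilon$ on that bounded range.

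The main obstacle is purely numerical, namely tracking the constant losses (the factor $C$ from the AP telescoping, the contribution $\tfrac12$ from the AP error, and the losses from regrouping the last block) to verify that they sum to at most $44\varepsilon$; the conceptual content of the argument is a textbook application of the Avalanche Principle as packaged in \cite{DL06a}.
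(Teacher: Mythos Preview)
Your approach is essentially the same as the paper's: the paper's proof is simply a pointer to \cite{DL06a}, noting that (a1)--(a4) coincide with conditions (I)--(IV) in the proof of \cite[Theorem~1]{DL06a}, and your sketch reconstructs exactly that Avalanche-Principle argument (block decomposition at scale $\ell$, verification of the AP hypotheses via (a1)--(a4), telescoping estimate, and treatment of the remainder). One caution on your handling of the range $\ell< n<2\ell$: the decomposition $A_{2\ell}=A_{2\ell-n}(T^n\omega)A_n(\omega)$ requires an upper bound on $\log\|A_{2\ell-n}\|$ with $2\ell-n<\ell$, which (a1) alone does not supply; in \cite{DL06a} this is handled by arranging the block lengths so that every factor and every consecutive product has length in $[\ell,2\ell]$, which is the regrouping you already describe for general $n$ and which should also be used here rather than the bare submultiplicative split.
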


\begin{proof}
The assumptions (a1), (a2), (a3) and (a4) of the lemma are just the conditions (I), (II), (III), (IV) appearing in the proof of \cite[Theorem~1]{DL06a}. The lower bound given in the conclusion of the lemma then follows by following this proof verbatim. The upper bound is obvious from the assumptions.
\end{proof}

\begin{remark} \label{r:nach-avalanche-lemma}
(a) Let us emphasize that the number $L$ appearing in the lemma is not required to be the Lyapunov exponent of $A$. It suffices that it is sufficiently close to the actual Lyapunov exponent. This is
relevant for an application to families of cocycles.
\\[1mm]
(b) It may be instructive to discuss the assumptions appearing in the lemma: The assumptions (a3) and (a4) are independent of $A$. For given $L > 0$ and $\varepsilon > 0$, they will be satisfied for all large enough $\ell$. For uniquely ergodic systems, the assumption (a1) is automatically satisfied for any given $\varepsilon$ if $L = L(A)$ and $\ell$ is large enough. So, in this sense for uniquely ergodic dynamical systems, the crucial condition is the second assumption (a2).
\\[1mm]
(c) Note  that the assumptions of the lemma are open conditions in the following sense: Consider an $A$ satisfying the assumptions for $\ell \in \N$, $L > 0$ and $\varepsilon > 0$.  Now, let $\varepsilon' > \varepsilon$ (with $\varepsilon' < 1/12$) be given. Then, any $B$ sufficiently close to $A$ will satisfy the assumptions of the lemma with the same $\ell$ and $L$ and $\varepsilon$ replaced by $\varepsilon'$. Indeed, the last two assumptions (a3) and (a4) do not depend on $A$ and are then clearly satisfied for $B$. The second assumption (a2) is satisfied for $B$ sufficiently close to $A$ due to $\varepsilon' > \varepsilon$. Similarly, the first assumption (a1) is satisfied for $B$ sufficiently close to $A$ by part (a) of Lemma~\ref{l:aux}.
\end{remark}

\end{appendix}

\end{document}